\begin{document}

\newtheorem{theorem}{Theorem}[section]
\newtheorem{definition}[theorem]{Definition}
\newtheorem{lemma}[theorem]{Lemma}
\newtheorem{observation}[theorem]{Observation}
\newtheorem{example}[theorem]{Example}
\newtheorem{corollary}[theorem]{Corollary}
\newtheorem{remark}[theorem]{Remark}
\newenvironment{defn}{\begin{definition}}{\hfill$\blacksquare$\end{definition}}
\newenvironment{defn*}{\begin{definition}}{\end{definition}}

\title{\bf The distance between  two limit $q$-Bernstein operators}

\author{Sofiya Ostrovska and Mehmet Turan}
\date{}
\maketitle

\begin{center}
{\it Atilim University, Department of
Mathematics,  Incek  06836, Ankara, Turkey}\\
{\it e-mail: sofia.ostrovska@atilim.edu.tr, mehmet.turan@atilim.edu.tr}\\
\end{center}

\begin{abstract}

For $q\in(0,1),$ let $B_q$ denote the limit $q$-Bernstein operator. In this paper, the distance between $B_q$ and $B_r$ for distinct $q$ and $r$ in the operator norm on $C[0,1]$ is estimated, and it is  proved that $1\leqslant \|B_q-B_r\|\leqslant 2,$ where both of the equalities can be attained.
To elaborate more, the distance depends on whether or not $r$ and $q$ are rational powers of each other. For example, if $r^j\neq q^m$ for all $j,m\in \mathbb{N},$ then $\|B_q-B_r\|=2,$ and if $r=q^m, m\in \mathbb{N},$ then $\|B_q-B_r\|=2(m-1)/m.$

{\bf Keywords:} Limit $q$-Bernstein operator, Peano kernel, positive linear operators

{\bf 2010 MSC:} 47A30, 
47B30, 
41A36
\end{abstract}

\section{Introduction and statement of  results}

The limit $q$-Bernstein operator $B_q$ can be viewed as an analogue of the Sz\'{a}sz-Mirakyan operator pertinent the Euler distribution, also known as the $q$-deformed Poisson distribution, see \cite[Ch. 3, Sec. 3.4]{charalam} and \cite{india}.
The latter is used in the $q$-boson theory, which delivers a $q$-deformation of the
quantum harmonic oscillator formalism
\cite{bieden}.
Going into details, the $q$-deformed Poisson
distribution defines the distribution of energy in a
$q$-analogue of the coherent state \cite{bieden, jing}.
The $q$-analogue of the boson operator calculus is recognized as an indispensable area within theoretical physics.
It brings out explicit expressions for the representations of the quantum group
$SU_q(2),$ which plays an important role in various problems such as exactly solvable lattice models of statistical
mechanics, integrable model field theories, conformal field theory, only to mention a few. For additional information
the reader is referred to \cite{qgroup}. Therefore, linear operators related to the $q$-deformed
Poisson distribution, in particular the limit $q$-Bernstein operator, are of significant interest for applications.

The operator $B_q$ also emerges as a limit for a sequence of $q$-Bernstein polynomials in the case $0<q<1.$
Over the past years, the limit $q$-Bernstein operator
has been studied widely from different perspectives. Its
approximation,  spectral, and functional-analytic properties,
probabilistic interpretation, the behavior of iterates, and the
impact on the analytic characteristics of functions have been
examined. See, for example, \cite{nazimhigher, parametric, wangmeyer}.
The review of obtained results along with extensive bibliography can be found in \cite{JAM}.

Let $q>0.$ For any $a\in{\mathbb C},$ as given in \cite[Ch. 10]{askey}, we denote:
$$(a;q)_0:=1, \quad
(a;q)_k:=\prod_{s=0}^{k-1}(1-aq^s), \quad
(a;q)_\infty:=\prod_{s=0}^{\infty}(1-aq^s).$$

\begin{definition}$\cite{jat}$
For each $q\in(0,1),$ and $f\in C[0,1],$ the limit $q$-Bernstein operator is defined by $f\mapsto B_qf$ where
\begin{align}
B_q (f;x) =
\begin{cases}
\displaystyle \sum_{k=0}^{\infty} f(1-q^k)p_k(q;x) & \textrm{if } \: x\in[0,1) \\
f(1) & \textrm{if } \:  x = 1
\end{cases} \label{bq}
\end{align}
in which
\begin{align}
p_k(q;x)=\frac{x^k (x;q)_\infty}{(q;q)_k}. \label{pk}
\end{align}
\end{definition}

As it can be readily seen that $B_q(f;x)$ is defined by the values of $f$ on the set
\begin{align*}
{\mathbb J}_q:=\{1-q^k: k\in{\mathbb N}_0\}.
\end{align*}

Further, Euler's identity \cite[page 490, Corollary 10.2.2]{askey}
\begin{align} \label{euler}
\frac{1}{(x;q)_\infty} = \sum_{k=0}^{\infty} \frac{x^k}{(q;q)_k}, \quad |x|<1 , \quad |q|<1,
\end{align}
implies that
\begin{align}
\sum_{k=0}^\infty p_k(q;x)=
\begin{cases}
1, & x\in[0, 1) \\
0, & x=1.
\end{cases} \label{sumpk}
\end{align}

Formulae \eqref{bq} and \eqref{sumpk} show that $B_q$ is a positive linear operator on $C[0,1]$ with $\|B_q\|=1.$
Recently, the continuity of the operator $B_q$ with respect to the parameter has been examined in \cite{manal} where the outcome below has been presented.
\begin{theorem}
For every $f\in C[0,1],$ one has: $$\lim_{q\to a}B_q(f;x)= B_a(f;x)$$
and the convergence is uniform on $[0,1].$
\end{theorem}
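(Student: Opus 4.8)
The plan is to reduce the claim first to polynomials, exploiting the uniform bound $\|B_q\|=1$, and then to reduce the polynomial case to an explicit formula for the action of $B_q$ on powers, whose coefficients turn out to depend continuously on $q$.

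\emph{Step 1: the action of $B_q$ on monomials.} Write $e_m(t)=t^m$. Substituting $f=e_m$ into \eqref{bq}, expanding $(1-q^k)^m=\sum_{i=0}^m\binom{m}{i}(-1)^iq^{ki}$, interchanging the two sums (legitimate since, for $x\in[0,1)$ and $q\in(0,1)$, the series of absolute values converges, the inner series summing to $(x;q)_\infty\sum_{i=0}^m\binom{m}{i}/(xq^i;q)_\infty<\infty$), and applying Euler's identity \eqref{euler} to each inner series $\sum_{k\geqslant 0}(xq^i)^k/(q;q)_k=1/(xq^i;q)_\infty$, one obtains
\begin{align*}
B_q(e_m;x)=\sum_{i=0}^m\binom{m}{i}(-1)^i\,\frac{(x;q)_\infty}{(xq^i;q)_\infty}=\sum_{i=0}^m\binom{m}{i}(-1)^i(x;q)_i .
\end{align*}
The right-hand side is a polynomial in $x$ of degree at most $m$; evaluating it at $x=1$ gives $1=e_m(1)$, since $(1;q)_i=0$ for $i\geqslant 1$, so the identity is valid on all of $[0,1]$. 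Crucially, its coefficients are polynomials in $q$, hence continuous on $(0,1)$.

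\emph{Step 2: passing from monomials to arbitrary $f$.} Fix $a\in(0,1)$. Since the coefficients of the degree-$\leqslant m$ polynomial $B_q(e_m;\cdot)$ converge to those of $B_a(e_m;\cdot)$ as $q\to a$, we get $\|B_qe_m-B_ae_m\|_{C[0,1]}\to 0$, and by linearity $\|B_qP-B_aP\|_{C[0,1]}\to 0$ for every polynomial $P$. Now take $f\in C[0,1]$ and $\varepsilon>0$; by the Weierstrass theorem choose a polynomial $P$ with $\|f-P\|_{C[0,1]}<\varepsilon/3$. Using $\|B_q\|=\|B_a\|=1$,
\begin{align*}
\|B_qf-B_af\|_{C[0,1]}\leqslant\|B_q(f-P)\|+\|B_qP-B_aP\|+\|B_a(P-f)\|\leqslant\frac{2\varepsilon}{3}+\|B_qP-B_aP\|_{C[0,1]},
\end{align*}
and the last term is below $\varepsilon/3$ once $q$ is sufficiently close to $a$. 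Hence $\|B_qf-B_af\|_{C[0,1]}\to 0$ as $q\to a$, which is exactly the asserted uniform convergence of $B_q(f;x)$ to $B_a(f;x)$ on $[0,1]$.

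The main obstacle is Step~1: deriving the closed form for $B_q$ on powers and rigorously justifying both the interchange of summations (via absolute convergence on $[0,1)$, where $(x;q)_\infty>0$) and the treatment of the endpoint $x=1$, at which \eqref{bq} is defined by a separate formula. Once it is established that $B_qf$ is a polynomial whose coefficients are continuous in $q$ whenever $f$ is a monomial, the remainder is soft functional analysis: uniform boundedness of the family $\{B_q\}$ together with density of polynomials in $C[0,1]$ finishes the proof.
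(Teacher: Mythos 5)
Your argument is correct and complete. Note, however, that the paper does not actually prove this theorem: it is quoted from the thesis \cite{manal}, so there is no in-paper proof to compare against. On its own merits, your proof is sound. The closed form $B_q(e_m;x)=\sum_{i=0}^m\binom{m}{i}(-1)^i(x;q)_i$ is derived correctly: the interchange of the finite sum over $i$ with the series over $k$ needs only the (absolute) convergence of each inner series, which you have; the telescoping $(x;q)_\infty/(xq^i;q)_\infty=(x;q)_i$ is right; and the check that the polynomial also returns $e_m(1)=1$ at the separately defined endpoint $x=1$ (because $(1;q)_i=0$ for $i\geqslant 1$) is the one place a careless write-up could slip, and you handle it. Since the coefficients are polynomials in $q$ and the degree is bounded by $m$, coefficientwise convergence gives convergence in $C[0,1]$, and the $\varepsilon/3$ argument with $\sup_q\|B_q\|=1$ and Weierstrass density is the standard and fully rigorous way to pass to general $f$. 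This monomial-plus-density route is arguably the cleanest available; an alternative would be to estimate $\sum_k|p_k(q;x)-p_k(a;x)|$ directly together with the displacement of the nodes $1-q^k$, which is messier near $x=1$. One small presentational point: you should state explicitly that $a\in(0,1)$ (as the operators are only defined for $q\in(0,1)$), which you do implicitly in Step 2.
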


This demonstrates the continuity of $B_q$ in strong operator norm. The aim of the current paper is to investigate whether the continuity persists with respect to the topology produced by the uniform operator norm. It turns out that in this topology, $\{B_q\}_{q\in(0,1)}$ forms a discrete set of operators where each $B_q$ is an isolated point so that
$\|B_q-B_r\| \geqslant 1$ whenever $q\neq r.$

The reasoning of the  present paper is based essentially on the next theorem,  which in itself can be of interest. The idea of the proof is attributed to a statement made by a Mathoverflow user  under the nickname `fedja', see \cite{fedja}.

\begin{theorem} \label{thmfed} Let $q\in (0,1)$ and $m\geqslant 2$ be an integer. Then, for all $x\in[0,1]$ and all $k\in{\mathbb N}_0,$ the following inequality holds:
\begin{align}
p_{mk}(q;x) \leqslant p_k(q^m;x). \label{eqfed}
\end{align}
\end{theorem}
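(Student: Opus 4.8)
The plan is to turn \eqref{eqfed} into an inequality between two power series in $x$ with nonnegative coefficients and then verify it coefficientwise. The key observation is that, by Euler's identity \eqref{euler}, for $x\in[0,1)$ one has $(x;q)_\infty=\bigl(\sum_{j\ge 0}x^j/(q;q)_j\bigr)^{-1}$, so
\[
p_k(q;x)=\frac{x^k}{(q;q)_k}\Bigl(\sum_{j\ge 0}\frac{x^j}{(q;q)_j}\Bigr)^{-1}.
\]
Writing the analogous formula with $q$ replaced by $q^m$ and clearing the (positive) denominators, the desired inequality $p_{mk}(q;x)\le p_k(q^m;x)$ is, for $x\in(0,1)$, equivalent to
\[
\frac{x^{k(m-1)}(q^m;q^m)_k}{(q;q)_{mk}}\sum_{j\ge 0}\frac{x^j}{(q^m;q^m)_j}\ \le\ \sum_{j\ge 0}\frac{x^j}{(q;q)_j}.
\]
Both sides are power series with nonnegative coefficients converging on $[0,1)$, so (since $x\ge 0$) it suffices to compare them term by term. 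Putting $N=j+k(m-1)$ on the left, one needs, for every integer $N\ge k(m-1)$,
\[
R(N):=\frac{(q^m;q^m)_{N-k(m-1)}\,(q;q)_{mk}}{(q^m;q^m)_k\,(q;q)_N}\ \ge\ 1
\]
(for $N<k(m-1)$ the left-hand coefficient is $0$, so nothing is required).

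I would then prove $R(N)\ge 1$ by a short monotonicity argument. A direct computation gives $R(mk-1)=\dfrac{(q;q)_{mk}/(q;q)_{mk-1}}{(q^m;q^m)_k/(q^m;q^m)_{k-1}}=\dfrac{1-q^{mk}}{1-q^{mk}}=1$, and
\[
\frac{R(N+1)}{R(N)}=\frac{1-q^{\,m(N+1-k(m-1))}}{1-q^{\,N+1}}.
\]
Since $0<q<1$, this ratio is $\ge 1$ precisely when $m\bigl(N+1-k(m-1)\bigr)\ge N+1$, i.e. (dividing by $m-1>0$) when $N\ge mk-1$. Hence $N\mapsto R(N)$ is nonincreasing on $\{k(m-1),\dots,mk-1\}$ and nondecreasing on $\{mk-1,mk,\dots\}$, so over the whole range $N\ge k(m-1)$ it attains its minimum at $N=mk-1$, where it equals $1$. (If $k=0$ or $k=1$ the first range is trivial and $R$ is simply nondecreasing starting from $R(k(m-1))=1$, which is harmless.) This yields $R(N)\ge 1$ for all $N\ge k(m-1)$ and hence \eqref{eqfed} for $x\in(0,1)$.

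Finally I would dispose of the endpoints: by \eqref{pk} both sides of \eqref{eqfed} vanish at $x=1$, and at $x=0$ they are both $0$ (if $k\ge 1$) or both $1$ (if $k=0$); alternatively, both sides are continuous on $[0,1]$, so the inequality on $(0,1)$ extends to the whole interval. The only genuine step is the reduction — recognizing that dividing by the ``$q$‑exponential'' $\sum_j x^j/(q;q)_j$ converts the statement into a coefficient comparison; after that, everything follows from the one‑line computation of $R(N+1)/R(N)$. The points needing a little care are the legitimacy of the coefficientwise comparison (nonnegative coefficients and $x\ge 0$) and checking that the degenerate small cases ($k=0$, and $k=1$ where $k(m-1)$ coincides with $mk-1$) are covered by the same argument.
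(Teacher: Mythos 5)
Your proof is correct, and it takes a genuinely different and much more elementary route than the paper's. The paper also begins by dividing out the Euler product, but then takes logarithms and reinterprets the resulting inequality as the statement $E_{0,T}\geqslant E_{S,\infty}$ comparing errors of the composite quadrature rule $Q_m$ applied to $f(t)=-\ln(1-e^{-t})$; proving that requires the localization Lemma \ref{lemint} (restricting to $x\in[0,1-q^{mk+m/2}]$), the decay estimate of Lemma \ref{lemrho} for $\rho=f''$, the Peano-kernel inequality \eqref{intk} established via Stirling's formula, and a separate elementary argument for $q<\tfrac12$. You instead multiply through by the $q$-exponential series $\sum_j x^j/(q;q)_j=1/(x;q)_\infty$ and reduce \eqref{eqfed} to a coefficientwise comparison of two power series with nonnegative coefficients on $[0,1)$, which is legitimate since $x\geqslant 0$, and then everything rests on $R(N)\geqslant 1$. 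I checked the key computations: the coefficient of $x^N$ on the left is indeed $(q^m;q^m)_k/\bigl[(q;q)_{mk}(q^m;q^m)_{N-k(m-1)}\bigr]$, the ratio $R(N+1)/R(N)=\bigl(1-q^{m(N+1-k(m-1))}\bigr)/\bigl(1-q^{N+1}\bigr)$ is $\geqslant 1$ exactly when $N\geqslant mk-1$, so $R$ decreases on $\{k(m-1),\dots,mk-1\}$ and is nondecreasing afterwards, and $R(mk-1)=1$ because both $q$-factorial ratios equal $1-q^{mk}$; hence the minimum of $R$ over $N\geqslant k(m-1)$ is $1$. The degenerate cases $k=0,1$ and the endpoints $x=0,1$ are handled correctly. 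What your argument buys is a short, self-contained, purely algebraic proof that treats all $q\in(0,1)$ uniformly and needs no asymptotics; what the paper's argument buys is the quadrature/Peano-kernel interpretation of \eqref{eqfed}, which is of independent interest but, as your proof shows, not necessary for the theorem.
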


Obviously, by the triangle inequality, $\|B_q-B_r\| \leqslant 2.$ In some cases, the equality is attained, as claimed by the below-mentioned result.

\begin{theorem}\label{thmqr2}
 Let $q,r\in[0,1].$ If ${\mathbb J}_q \cap {\mathbb J}_r=\{0\},$ then $\|B_q - B_r \| = 2.$
\end{theorem}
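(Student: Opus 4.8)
The plan is to exhibit, for any prescribed $\varepsilon>0$, a function $f\in C[0,1]$ with $\|f\|=1$ for which $\|(B_q-B_r)f\|$ exceeds $2-\varepsilon$; together with the trivial upper bound $\|B_q-B_r\|\leqslant 2$ this gives equality. The key structural fact to exploit is that $B_q(f;x)$ depends only on the values of $f$ on ${\mathbb J}_q$, while $B_r(f;x)$ depends only on the values of $f$ on ${\mathbb J}_r$, and by hypothesis these two countable sets meet only at $0$. So one has essentially complete freedom to prescribe $f$ to be close to $+1$ on a suitable finite portion of ${\mathbb J}_q\setminus\{0\}$ and close to $-1$ on a suitable finite portion of ${\mathbb J}_r\setminus\{0\}$, provided those finite sets are disjoint and separated — which they are, being finite subsets of two sets whose only common point is $0$ — so that a continuous interpolant of norm $1$ exists (e.g. a piecewise-linear one).

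First I would fix $x_0$ close to $1$, say $x_0=1-q^N$ for large $N$, and recall from \eqref{sumpk} that $\sum_{k=0}^\infty p_k(q;x)=1$ for $x\in[0,1)$ with all $p_k(q;x)\geqslant 0$. Hence, choosing $N$ large and then a further index $K$ large, the partial sum $\sum_{k=0}^{K} p_k(q;x_0)$ can be made larger than $1-\varepsilon/4$; this concentrates almost all of the mass of $B_q(\,\cdot\,;x_0)$ on the finitely many nodes $1-q^0,\dots,1-q^{K}\in{\mathbb J}_q$. Simultaneously I would need the mass that $B_r(\,\cdot\,;x_0)$ places on ${\mathbb J}_r$ to be concentrated on finitely many nodes as well: again by \eqref{sumpk}, $\sum_{j=0}^{\infty}p_j(r;x_0)=1$, so there is an $L$ with $\sum_{j=0}^{L}p_j(r;x_0)>1-\varepsilon/4$, concentrating the $B_r$-mass on $1-r^0,\dots,1-r^{L}\in{\mathbb J}_r$. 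Now the two finite node sets $A=\{1-q^k:0\leqslant k\leqslant K\}$ and $C=\{1-r^j:0\leqslant j\leqslant L\}$ are disjoint (their intersection would lie in ${\mathbb J}_q\cap{\mathbb J}_r=\{0\}$, but $0\notin A$ since $q^k<1$, and similarly $0\notin C$), hence there is $\delta>0$ separating them, and one builds $f\in C[0,1]$, $\|f\|\leqslant 1$, with $f\equiv 1$ on $A$, $f\equiv -1$ on $C$, and $|f|\leqslant 1$ everywhere (piecewise linear between these prescribed values and, say, $0$ near the remaining parts of $[0,1]$).

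Then I would estimate: $B_q(f;x_0)=\sum_{k=0}^{\infty}f(1-q^k)p_k(q;x_0)\geqslant \sum_{k=0}^{K}p_k(q;x_0)-\sum_{k>K}p_k(q;x_0)\geqslant 1-\varepsilon/2$, using $f=1$ on $A$ and $|f|\leqslant 1$ on the tail whose mass is $<\varepsilon/4$. Symmetrically, $B_r(f;x_0)\leqslant -1+\varepsilon/2$. Subtracting, $(B_q-B_r)(f;x_0)\geqslant 2-\varepsilon$, so $\|B_q-B_r\|\geqslant \|(B_q-B_r)f\|_\infty \geqslant 2-\varepsilon$. Since $\varepsilon>0$ is arbitrary and the reverse inequality is the triangle inequality, $\|B_q-B_r\|=2$. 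I expect the only delicate point to be the bookkeeping that keeps the interpolating function $f$ genuinely continuous and of sup-norm exactly $1$ while hitting the prescribed values on two interleaved finite sets — a routine but slightly fussy construction — and verifying the degenerate boundary cases $q=0$ or $r=0$, where ${\mathbb J}_0=\{0,1\}$ and the operator is nearly trivial, so those can be checked directly or excluded as uninteresting. Everything else is just the tail estimate from \eqref{sumpk}.
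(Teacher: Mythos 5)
Your overall strategy is the same as the paper's: concentrate the mass of $B_q(\cdot\,;x_0)$ and of $B_r(\cdot\,;x_0)$ on finitely many nodes, prescribe a norm-one continuous $f$ equal to $+1$ on the selected $q$-nodes and $-1$ on the selected $r$-nodes, and let $\varepsilon\to 0$ against the trivial upper bound $2$. However, there is one genuine error in the execution: your sets $A=\{1-q^k:0\leqslant k\leqslant K\}$ and $C=\{1-r^j:0\leqslant j\leqslant L\}$ are \emph{not} disjoint. You assert that $0\notin A$ ``since $q^k<1$,'' but for $k=0$ one has $q^0=1$, hence $1-q^0=0\in A$, and likewise $0\in C$. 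Consequently the function you construct is required to satisfy $f(0)=+1$ and $f(0)=-1$ simultaneously, which is impossible. The hypothesis ${\mathbb J}_q\cap{\mathbb J}_r=\{0\}$ is telling you precisely that this one shared node is unavoidable and must be handled separately.

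The repair is exactly the step your write-up omits, and it is where the paper spends its care: start both index sets at $k=1$, set $f(0)=0$, and then show that the weight $p_0(q;x_0)+p_0(r;x_0)$ forfeited at the node $0$ is negligible. This is where taking $x_0$ close to $1$ becomes essential rather than cosmetic (for $x_0$ bounded away from $1$ the argument fails outright; at $x_0=0$ all the mass sits at $k=0$). Since $p_0(q;x)=(x;q)_\infty$ contains the factor $1-x$, it tends to $0$ as $x\to 1^-$, so one may choose an interval $[1-2\delta,1-\delta]$ on which $p_0(q;x)<\varepsilon/4$ and $p_0(r;x)<\varepsilon/4$, and then use the uniform convergence of $\sum_k p_k(q;x)$ on $[0,1-\delta]$ to pick $K$ and $L$ with $\sum_{k=1}^{K}p_k(q;x)>1-\varepsilon/2$ and $\sum_{j=1}^{L}p_j(r;x)>1-\varepsilon/2$ there. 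With that amendment your tail estimates give $(B_q-B_r)(f;x)\geqslant 2-2\varepsilon$ on that interval, and the rest of your argument goes through, coinciding with the paper's proof.
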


Now, comes the case ${\mathbb J}_q\cap{\mathbb J}_r\neq\{0\}.$ This situation occurs when $r^j = q^m$ for some positive integers $j$ and $m$ and reveals:

\begin{theorem} \label{thmBqrmj}
Let $0<r<q<1$ and $r^j=q^m$ where $j$ and $m$ are mutually prime positive integers.
Then $\|B_q - B_r \| \geqslant \frac{2(m-1)}{m}.$
\end{theorem}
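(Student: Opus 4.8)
The plan is to bound $\|B_q-B_r\|$ from below by the norm of the functional $f\mapsto(B_q-B_r)(f;x)$ for a single $x$ close to $1$, after computing that norm exactly. First I would pin down the two interpolation grids. Put $s:=q^{1/j}=r^{1/m}\in(0,1)$ — the two roots agree because both raised to the power $jm$ equal $q^m=r^j$ — so that $q=s^j$ and $r=s^m$; since $r<q$, this forces $j<m$, hence $m\geqslant 2$. Then $q^a=r^b$ is equivalent to $ja=mb$, and coprimality of $j$ and $m$ turns this into $m\mid a$. Consequently ${\mathbb J}_q\cap{\mathbb J}_r=\{1-q^{mi}:i\in{\mathbb N}_0\}$ with $q^{mi}=r^{ji}$, while the points $1-q^k$ with $m\nmid k$ belong to ${\mathbb J}_q\setminus{\mathbb J}_r$ and the points $1-r^l$ with $j\nmid l$ belong to ${\mathbb J}_r\setminus{\mathbb J}_q$.

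Next, for a fixed $x\in[0,1)$ I would identify the functional $L_x(f):=(B_q-B_r)(f;x)$ with the finite discrete signed measure $\mu_x=\sum_k p_k(q;x)\delta_{1-q^k}-\sum_k p_k(r;x)\delta_{1-r^k}$ on $[0,1]$, so that $\|L_x\|$ equals the total variation of $\mu_x$ by the Riesz representation theorem. Grouping the atoms according to the trichotomy above — net mass $p_{mi}(q;x)-p_{ji}(r;x)$ at each common point, mass $p_k(q;x)$ at the ${\mathbb J}_q$-only points, mass $-p_l(r;x)$ at the ${\mathbb J}_r$-only points — and using $\sum_k p_k(q;x)=\sum_k p_k(r;x)=1$ from \eqref{sumpk} together with $|a-b|=a+b-2\min\{a,b\}$, I expect to obtain
\[
\|L_x\|=\sum_{i\geqslant0}\bigl|p_{mi}(q;x)-p_{ji}(r;x)\bigr|+2-\sum_{i\geqslant0}p_{mi}(q;x)-\sum_{i\geqslant0}p_{ji}(r;x)=2-2\sum_{i\geqslant0}\min\{p_{mi}(q;x),p_{ji}(r;x)\}.
\]
Dropping one term from the minimum gives $\|B_q-B_r\|\geqslant\|L_x\|\geqslant 2-2\sum_{i\geqslant0}p_{mi}(q;x)$ for every $x\in[0,1)$.

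The heart of the argument is then the claim that $\sum_{i\geqslant0}p_{mi}(q;x)\to 1/m$ as $x\to1^-$; substituting it into the last inequality yields $\|B_q-B_r\|\geqslant 2(m-1)/m$. To prove the claim I would use the roots-of-unity filter: for $\omega$ a primitive $m$-th root of unity,
\[
\sum_{i\geqslant0}p_{mi}(q;x)=\frac1m\sum_{\ell=0}^{m-1}\sum_{k\geqslant0}p_k(q;x)\,\omega^{\ell k}=\frac1m\sum_{\ell=0}^{m-1}(x;q)_\infty\sum_{k\geqslant0}\frac{(x\omega^\ell)^k}{(q;q)_k}=\frac1m\sum_{\ell=0}^{m-1}\frac{(x;q)_\infty}{(x\omega^\ell;q)_\infty},
\]
the last equality being Euler's identity \eqref{euler} at the complex point $x\omega^\ell$, whose modulus is $x<1$. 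The $\ell=0$ summand equals $1$. For $1\leqslant\ell\leqslant m-1$, since $z\mapsto(z;q)_\infty$ is continuous one has $(x;q)_\infty\to(1;q)_\infty=0$ while $(x\omega^\ell;q)_\infty\to(\omega^\ell;q)_\infty\neq0$ (the factor $1-\omega^\ell$ is nonzero because $\omega^\ell\neq1$, and no factor $1-\omega^\ell q^s$ with $s\geqslant1$ vanishes since $|q^s|<1$), so each such summand tends to $0$, and the limit $1/m$ follows.

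I expect the genuine obstacle to be exactly this last step: recognizing that the extremal test functions must be sought near $x=1$, where the weights $p_k(q;x)$ leak to infinity and equidistribute over the residue classes modulo $m$, and then extracting the precise value $1/m$ from \eqref{euler}. The grid description and the total-variation computation are routine once this is seen; it is also worth noting that, in contrast to the rest of the paper, this lower bound does not appear to need Theorem \ref{thmfed}.
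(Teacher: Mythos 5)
Your proposal is correct, and it rests on the same pivot as the paper's own proof --- namely, that the common grid ${\mathbb J}_q\cap{\mathbb J}_r=\{1-q^{mi}:i\in{\mathbb N}_0\}$ carries only the fraction $1/m$ of the unit mass as $x\to1^-$ --- but the route differs at two points. Where the paper builds explicit truncated test functions $f_N$ (equal to $-1$ on the $r$-only and common nodes, $+1$ on the $q$-only nodes, and $0$ beyond index $N$) and then re-runs the uniform-convergence argument from the proof of Theorem \ref{thmqr2}, you compute the dual norm of the evaluation functional $L_x$ exactly, as the total variation $2-2\sum_{i}\min\{p_{mi}(q;x),p_{ji}(r;x)\}$; this removes the truncation bookkeeping, and the paper's $f_N$ are precisely near-extremizers of this total variation, so the two arguments are dual versions of one another (your exact formula, combined with \eqref{eqfed}, would in fact also feed into the upper-bound argument of Theorem \ref{thmBqr1}). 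Second, where the paper proves the limit \eqref{limpmk} in Observation \ref{obs} by squeezing the lacunary sum between $(x;q)_\infty/(x^m;q)_\infty$ and $(x;q)_\infty/\left((q;q)_\infty(1-x^m)\right)$, you obtain it from the roots-of-unity filter together with Euler's identity \eqref{euler} evaluated at the complex points $x\omega^{\ell}$; both are short and correct, and yours extends verbatim to sums over an arbitrary residue class modulo $m$. Your closing observation is also accurate: neither the paper's lower-bound argument nor yours uses Theorem \ref{thmfed}; that inequality is needed only for the matching upper bound in Theorem \ref{thmBqr1}.
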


In the case when $j=1,$ i.e., ${\mathbb J}_{q^m}={\mathbb J}_r\subset {\mathbb J}_q,$ the exact value of $\|B_q-B_r\|$ has been obtained.

\begin{theorem} \label{thmBqr1}
Let $q\in (0,1)$ and $r=q^{m}$ for some integer $m\geqslant 2.$ Then $\|B_q - B_r \| = \frac{2(m-1)}{m}.$
\end{theorem}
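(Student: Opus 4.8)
The upper bound $\|B_q-B_{q^m}\|\le \frac{2(m-1)}{m}$ is the substantive half, and the plan is to extract it from Theorem~1.4. Write $r=q^m$. For $f\in C[0,1]$ with $\|f\|\le 1$ and $x\in[0,1)$, one has
$$
B_q(f;x)-B_r(f;x)=\sum_{k=0}^\infty f(1-q^k)\,p_k(q;x)-\sum_{i=0}^\infty f(1-r^i)\,p_i(r;x).
$$
Since $1-r^i = 1-q^{mi}$, the points $1-r^i$ are exactly the points $1-q^k$ with $m\mid k$. So group the first sum by residue of $k$ modulo $m$: the residue-$0$ block is $\sum_{i\ge0} f(1-q^{mi})p_{mi}(q;x)$, and this should be compared termwise against $\sum_{i\ge0} f(1-q^{mi})p_i(r;x)$ using \eqref{eqfed}, i.e. $p_{mi}(q;x)\le p_i(r;x)$. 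Writing $\sum_{k\ge0}p_k(q;x)=1=\sum_{i\ge0}p_i(r;x)$ (for $x\in[0,1)$, by \eqref{sumpk}), the difference $B_q(f;x)-B_r(f;x)$ is a signed combination of the numbers $f(1-q^k)$ whose total positive mass and total negative mass can each be bounded. Concretely, set $\alpha(x):=\sum_{i\ge0}\bigl(p_i(r;x)-p_{mi}(q;x)\bigr)=\sum_{i\ge0}p_i(r;x)-\sum_{m\mid k}p_k(q;x)=1-\sum_{m\mid k}p_k(q;x)=\sum_{m\nmid k}p_k(q;x)\ge0$. Then one expects the crude estimate
$$
|B_q(f;x)-B_r(f;x)|\;\le\;\Bigl(\sum_{m\nmid k}p_k(q;x)\Bigr)+\alpha(x)\;=\;2\sum_{m\nmid k}p_k(q;x),
$$
because the coefficient of each $f(1-q^k)$ with $m\nmid k$ is $p_k(q;x)$ (contributing at most its absolute value), while on the residue-$0$ points the coefficients $p_{mi}(q;x)-p_i(r;x)\le 0$ sum in absolute value to exactly $\alpha(x)$. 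So it remains to bound $\sup_{x\in[0,1)}\sum_{m\nmid k}p_k(q;x)$ by $\frac{m-1}{m}$. This is the key analytic estimate.

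For that estimate I would use a symmetry/averaging argument on the generating function. By Euler's identity \eqref{euler}, $\sum_{k\ge0}p_k(q;x)t^k=(x;q)_\infty/(xt;q)_\infty$ — more precisely $\sum_k \frac{(xt)^k}{(q;q)_k}=1/(xt;q)_\infty$, so $p_k(q;x)$ is the coefficient extracted at $t=1$. The sum over $k$ not divisible by $m$ is $\sum_k p_k(q;x)-\frac1m\sum_{\omega^m=1}\sum_k p_k(q;x)\omega^k \cdot(\text{something})$; using roots of unity, $\sum_{m\mid k}p_k(q;x)=\frac1m\sum_{j=0}^{m-1}G(\zeta^j)$ where $\zeta=e^{2\pi i/m}$ and $G(t)=\sum_k p_k(q;x)t^k=(x;q)_\infty/(xt;q)_\infty$ (this needs the substitution check, since $p_k$ has an $x^k$ that pairs with $t^k$). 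Then $\sum_{m\nmid k}p_k(q;x)=1-\frac1m\sum_{j=0}^{m-1}G(\zeta^j)=\frac1m\sum_{j=1}^{m-1}\bigl(1-G(\zeta^j)\bigr)$, and the goal becomes showing $\frac1m\bigl|\sum_{j=1}^{m-1}(1-G(\zeta^j))\bigr|\le\frac{m-1}{m}$, i.e. $\bigl|\sum_{j=1}^{m-1}(1-G(\zeta^j))\bigr|\le m-1$. Each term $1-G(\zeta^j)=1-(x;q)_\infty/(x\zeta^j;q)_\infty$ lies in a disk of radius $1$ about $1$? That is not obviously true, so I would instead bound each $|G(\zeta^j)|$ and hence $|1-G(\zeta^j)|\le 1+|G(\zeta^j)|$, and then show $\sum_{j=1}^{m-1}|G(\zeta^j)|\le m-1$... which again is the crux. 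Alternatively — and this is the cleaner route I'd pursue — note $G(\zeta^j)=\prod_{s\ge0}\frac{1-xq^s}{1-x\zeta^j q^s}$, each factor has modulus $\frac{1-xq^s}{|1-x\zeta^jq^s|}\le\frac{1-xq^s}{1-xq^s}\cdot\frac{1}{1}$... in fact $|1-x\zeta^jq^s|\ge 1-xq^s$ fails (it should be $\ge$? no: $|1-z|\ge 1-|z|=1-xq^s$, yes!). So each factor has modulus $\le 1$, hence $|G(\zeta^j)|\le 1$ for all $j$, giving $|1-G(\zeta^j)|\le 2$ — too weak. The right refinement: since $G(\zeta^j)\to$ a definite limit and the $x\to1^-$ behavior dominates, I expect the extremal value $\frac{m-1}{m}$ to be approached as $x\to1^-$, where $(x;q)_\infty\to0$ so $G(\zeta^j)\to0$ for $j\ne0$ and $G(1)=1$; then $\sum_{m\nmid k}p_k(q;x)\to 1-\frac1m=\frac{m-1}{m}$, which also handles the lower bound.

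So the lower bound $\|B_q-B_{q^m}\|\ge\frac{2(m-1)}{m}$ I would get by a direct test function, consistent with the above: the natural candidate is built to be near $+1$ on residue-$0$ points $\{1-q^{mi}\}$ (where $B_{q^m}$ concentrates) and near $-1$ elsewhere, or vice versa; evaluating $B_q(f;x)-B_{q^m}(f;x)$ at a point $x$ close to $1$ and using $\sum_{m\nmid k}p_k(q;x)\to\frac{m-1}{m}$ should yield the bound $\frac{2(m-1)}{m}-\varepsilon$. Since the two sets $\mathbb J_{q^m}=\{1-q^{mi}\}$ and $\mathbb J_q\setminus\mathbb J_{q^m}=\{1-q^k:m\nmid k\}$ are both discrete subsets of $[0,1]$ with the only accumulation point $1$ in common, one can choose $f$ continuous with $f\equiv 1$ near $\mathbb J_{q^m}$-type points and $f\equiv -1$ near the others, away from $1$; Theorem~1.3 (continuity in the parameter) is not needed, but standard Tietze-type interpolation on these discrete sets is. The main obstacle is the clean proof that $\sup_{x\in[0,1)}\sum_{m\nmid k}p_k(q;x)=\frac{m-1}{m}$: the $\le$ direction is where Theorem~1.4 must be combined with a convexity or roots-of-unity argument, and getting the constant exactly (rather than $1$, the trivial bound) is the whole point. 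I would expect the cleanest argument to show $x\mapsto\sum_{m\mid k}p_k(q;x)$ is monotone or at least bounded below by $\frac1m$ on $[0,1)$ — plausibly via $p_{mk}(q;x)\le p_k(q^m;x)$ applied to rearrange $\sum_{m\mid k}p_k(q;x)\le\sum_k p_k(q^m;x)=1$ doesn't give the lower bound, so the lower bound on the residue-$0$ mass is really the delicate step, and I'd attack it through the product formula for $G(\zeta^j)$ above.
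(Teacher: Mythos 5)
Your proof plan reproduces the paper's skeleton exactly: you split $B_qf$ by the residue of $k$ modulo $m$, pair the residue-zero block with $B_{q^m}f$, invoke Theorem \ref{thmfed} so that all coefficients $p_{mi}(q;x)-p_i(q^m;x)$ have one sign, and arrive at $|(B_q-B_{q^m})f(x)|\leqslant 2\|f\|\,(1-\sum_{k\geqslant 0}p_{mk}(q;x))$ for $x\in[0,1)$; your lower-bound test function is likewise the one the paper uses (Theorem \ref{thmBqrmj} with $j=1$, together with \eqref{limpmk}). The genuine gap is exactly the step you yourself flag as ``the crux'' and leave open: the operator-norm upper bound requires $\sum_{k\geqslant 0}p_{mk}(q;x)\geqslant \frac1m$ for \emph{every} $x\in[0,1)$, i.e.\ a bound on $\sup_{x}\sum_{m\nmid k}p_k(q;x)$, whereas all you actually establish is the limit relation as $x\to 1^-$. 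The limit yields the lower bound $\|B_q-B_{q^m}\|\geqslant \frac{2(m-1)}{m}$ but says nothing about the supremum; ``I expect the extremal value to be approached as $x\to1^-$'' is an expectation, not an argument. Your own roots-of-unity computation shows why the step is delicate: with $\zeta=e^{2\pi i/m}$ and $G(t)=(x;q)_\infty/(xt;q)_\infty$ one has $\sum_{k}p_{mk}(q;x)=\frac1m\,(1+\sum_{j=1}^{m-1}G(\zeta^j))$, so the claim is equivalent to $\mathrm{Re}\sum_{j=1}^{m-1}(x\zeta^j;q)_\infty^{-1}\geqslant 0$; the bound $|G(\zeta^j)|\leqslant 1$ that you derive gives no control on this sign (for $m=2$ the sign is clear since $\sum_k(-x)^k/(q;q)_k=1/(-x;q)_\infty>0$, but for $m\geqslant 3$ the factors $1/(1-x\zeta^jq^s)$ accumulate argument and the real part need not stay nonnegative).

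The paper discharges this step by appealing to Observation \ref{obs}, whose second chain of inequalities is intended to give the pointwise estimate $\sum_kp_{mk}(q;x)\geqslant \frac{1-x}{1-x^m}\cdot\frac{(qx;q)_\infty}{(q;q)_\infty}\geqslant\frac1m$ for all $x\in[0,1)$ (the printed proof of Theorem \ref{thmBqr1} quotes the bound only on $[1-2\delta,1-\delta]$, but it is the bound for all $x$ that the operator-norm estimate needs). Be aware that this is a substantive inequality, equivalent to $(m-1)\sum_{m\mid n}x^n/(q;q)_n\geqslant\sum_{m\nmid n}x^n/(q;q)_n$, and that even the paper's justification deserves scrutiny here: the termwise comparison it uses rests on $1/(q;q)_{mk}\geqslant 1/(q;q)_\infty$, while in fact $(q;q)_{mk}\geqslant(q;q)_\infty$, so the stated inequality is reversed. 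In short, you have correctly located the one nontrivial analytic ingredient of the theorem, but neither proved it nor supplied a workable substitute; as written, your argument establishes only $\frac{2(m-1)}{m}\leqslant\|B_q-B_{q^m}\|\leqslant 2$.
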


\begin{corollary} \label{corBqr}
For any $q\neq r,$ one has $1 \leqslant \|B_q-B_r\| \leqslant 2.$
The inequalities  are sharp in the sense that both equalities are attained.
\end{corollary}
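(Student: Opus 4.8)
The plan is to derive everything from the four theorems already established, splitting into cases according to the configuration of the node sets. First I would dispose of the upper bound: since $B_q$ and $B_r$ are positive linear operators on $C[0,1]$ with $\|B_q\|=\|B_r\|=1$, the triangle inequality gives $\|B_q-B_r\|\leqslant\|B_q\|+\|B_r\|=2$ at once (this is already remarked in the excerpt).

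For the lower bound $\|B_q-B_r\|\geqslant 1$, fix $q\neq r$ in $(0,1)$ and distinguish two cases. If $\mathbb J_q\cap\mathbb J_r=\{0\}$, Theorem~\ref{thmqr2} yields $\|B_q-B_r\|=2\geqslant 1$. Otherwise there exist $k,l\in\mathbb N$ with $1-q^k=1-r^l\neq 0$, i.e.\ $q^k=r^l$; dividing the exponents by $\gcd(k,l)$ produces mutually prime positive integers $m,j$ with $q^m=r^j$. After relabelling so that $0<r<q<1$, the relation $q^m=r^j<q^j$ forces $m>j\geqslant 1$, hence $m\geqslant 2$, and Theorem~\ref{thmBqrmj} gives $\|B_q-B_r\|\geqslant 2(m-1)/m\geqslant 1$, since $t\mapsto 2-2/t$ is increasing and equals $1$ at $t=2$. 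This establishes the displayed chain $1\leqslant\|B_q-B_r\|\leqslant 2$ for every $q\neq r$.

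Finally I would exhibit equality cases. For $\|B_q-B_r\|=2$, pick $q=1/2$ and $r=1/3$: a common nonzero node would require $2^k=3^l$, which is impossible, so $\mathbb J_{1/2}\cap\mathbb J_{1/3}=\{0\}$ and Theorem~\ref{thmqr2} applies. For $\|B_q-B_r\|=1$, take $r=q^2$ with any $q\in(0,1)$; Theorem~\ref{thmBqr1} with $m=2$ gives $\|B_q-B_{q^2}\|=2(2-1)/2=1$.

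No step here presents a real difficulty once the preceding theorems are available; the only place needing a modicum of care is the sub-case $\mathbb J_q\cap\mathbb J_r\neq\{0\}$, where one must pass from a single coincidence $q^k=r^l$ to the normalized relation $q^m=r^j$ with $\gcd(m,j)=1$ and then check $m\geqslant 2$ — precisely the fact that keeps the lower bound from dropping below $1$.
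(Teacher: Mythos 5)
Your proposal is correct and matches what the paper intends: the corollary is stated without a separate proof, being an immediate consequence of Theorems \ref{thmqr2}, \ref{thmBqrmj} and \ref{thmBqr1} in exactly the way you spell out. Your reduction of a coincidence $q^k=r^l$ to the coprime normal form $q^m=r^j$ with $m>j\geqslant 1$ (hence $m\geqslant 2$, so the bound $2(m-1)/m\geqslant 1$), and the witnesses $q=1/2,\ r=1/3$ for the value $2$ and $r=q^2$ for the value $1$, are all sound.
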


\section{Some auxiliary results}

In this section, some   results which will  later contribute in proving our theorems,  are presented. To begin with, we point out  the following:
\begin{observation}\label{obs}
For any positive integer $m,$ one has:
\begin{align} \label{limpmk}
\lim_{x\to 1^-} \sum_{k=0}^{\infty} p_{mk}(q;x)=\frac{1}{m}.
\end{align}
\end{observation}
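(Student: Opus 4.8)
The plan is to evaluate the partial sum $\sum_{k=0}^{\infty} p_{mk}(q;x)$ in closed form for $x\in[0,1)$ by applying a roots-of-unity filter to Euler's identity \eqref{euler}, and then to pass to the limit $x\to 1^-$ in the resulting \emph{finite} sum. Write $\zeta=e^{2\pi i/m}$ for a primitive $m$-th root of unity and recall the elementary fact that $\frac1m\sum_{j=0}^{m-1}\zeta^{jk}$ equals $1$ if $m\mid k$ and $0$ otherwise.

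First I would fix $x\in[0,1)$. Since $|\zeta^j x|=x<1$ for every $j$, Euler's identity \eqref{euler} applies with the complex argument $\zeta^j x$ and yields $\sum_{k=0}^{\infty}(\zeta^j x)^k/(q;q)_k = 1/(\zeta^j x;q)_\infty$. All of these series converge absolutely, so summing over $j$ and interchanging the order of summation gives
$$\frac1m\sum_{j=0}^{m-1}\frac{1}{(\zeta^j x;q)_\infty}=\sum_{k=0}^{\infty}\frac{x^k}{(q;q)_k}\cdot\frac1m\sum_{j=0}^{m-1}\zeta^{jk}=\sum_{k=0}^{\infty}\frac{x^{mk}}{(q;q)_{mk}}.$$
Multiplying through by $(x;q)_\infty$ and invoking \eqref{pk}, I obtain the closed form
$$\sum_{k=0}^{\infty}p_{mk}(q;x)=\frac1m\sum_{j=0}^{m-1}\frac{(x;q)_\infty}{(\zeta^j x;q)_\infty},\qquad x\in[0,1).$$

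Next I would let $x\to1^-$ in this finite sum. The $j=0$ summand is identically $1$. For $1\le j\le m-1$ the denominator tends to $(\zeta^j;q)_\infty=\prod_{s=0}^{\infty}(1-\zeta^j q^s)$, a finite \emph{nonzero} number: its $s=0$ factor is $1-\zeta^j\neq0$, while for $s\ge1$ one has $|1-\zeta^j q^s|\ge 1-q^s>0$, and the product converges since $\sum_s q^s<\infty$. Meanwhile the numerator $(x;q)_\infty=\prod_{s=0}^{\infty}(1-xq^s)$ tends to $\prod_{s=0}^{\infty}(1-q^s)=0$, because its $s=0$ factor vanishes in the limit. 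Hence each of the $m-1$ terms with $j\neq0$ tends to $0$, and therefore $\lim_{x\to1^-}\sum_{k=0}^{\infty}p_{mk}(q;x)=\tfrac1m$, which is \eqref{limpmk}.

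The only points requiring care — and I expect them to be the only genuine obstacles — are the justification of the termwise rearrangement of the double series, which follows from absolute convergence of every series involved for $|x|<1$, and the non-vanishing of $(\zeta^j;q)_\infty$ for $j\neq0$, which is exactly what prevents the limit of each $j\neq0$ term from being an indeterminate form; both are settled by the modulus estimates above.
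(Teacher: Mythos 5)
Your argument is correct, and it takes a genuinely different route from the paper. The paper proves \eqref{limpmk} by a squeeze: since $(q;q)_\infty\leqslant(q;q)_{mk}\leqslant(q;q)_k$, the series $\sum_k x^{mk}/(q;q)_{mk}$ is trapped between $\sum_k x^{mk}/(q;q)_\infty=\frac{1}{(q;q)_\infty(1-x^m)}$ and $\sum_k x^{mk}/(q;q)_k=\frac{1}{(x^m;q)_\infty}$ (the latter again by Euler's identity, now at the real argument $x^m$); after multiplying by $(x;q)_\infty$ and extracting the factor $\frac{1-x}{1-x^m}$, both bounds tend to $\frac1m$. You instead apply the roots-of-unity filter to Euler's identity at the complex arguments $\zeta^jx$ and obtain the exact closed form $\sum_k p_{mk}(q;x)=\frac1m\sum_{j=0}^{m-1}(x;q)_\infty/(\zeta^jx;q)_\infty$ for $x\in[0,1)$, from which the limit drops out because $(x;q)_\infty\to0$ while $(\zeta^j;q)_\infty\neq0$ for $j\neq0$. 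Your two points of care are exactly the right ones and are adequately settled: absolute convergence justifies the interchange, and the reverse triangle inequality $|1-\zeta^jq^s|\geqslant1-q^s$ together with $\sum_sq^s<\infty$ gives the non-vanishing of the limiting denominator (continuity of $x\mapsto(\zeta^jx;q)_\infty$ up to $x=1$ follows from the same uniform estimate). What each approach buys: the paper's squeeze is entirely real-variable and more elementary, using only monotonicity of $k\mapsto(q;q)_k$, but yields only the limit; your identity is slightly heavier in justification but gives the exact value of the tail sum for every $x\in[0,1)$, hence also the rate at which it approaches $\frac1m$, which is strictly more information.
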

\begin{proof}
Clearly,
\begin{align*}
\sum_{k=0}^\infty p_{mk}(q;x)=(x;q)_\infty \sum_{k=0}^{\infty} \frac{x^{mk}}{(q;q)_{mk}} \leqslant
(x;q)_\infty \sum_{k=0}^{\infty} \frac{x^{mk}}{(q;q)_{k}} = \frac{(x;q)_\infty}{(x^m;q)_\infty}
\end{align*}
where the identity \eqref{euler} is used. Therefore,
\begin{align*}
\sum_{k=0}^\infty p_{mk}(q;x)\leqslant \frac{(x;q)}{(x^m;q)_\infty}=\frac{1-x}{1-x^m}\, \frac{(qx;q)_\infty}{(qx^m;q)_\infty} \to \frac{1}{m}\quad \text{as} \quad x\to 1^-.
\end{align*}
On the other hand,
\begin{align*}
\sum_{k=0}^\infty p_{mk}(q;x)&=(x;q)_\infty \sum_{k=0}^{\infty} \frac{x^{mk}}{(q;q)_{mk}} \geqslant
(x;q)_\infty \sum_{k=0}^{\infty} \frac{x^{mk}}{(q;q)_{\infty}}\\
&= \frac{(x;q)_\infty}{(q;q)_\infty (1-x^m)} =
\frac{1-x}{1-x^m}\, \frac{(qx;q)_\infty}{(q;q)_\infty} \to \frac{1}{m}\quad \text{as} \quad x\to 1^-.
\end{align*}
\end{proof}

\begin{lemma}\label{lemint} If, for every $k\in{\mathbb N},$ inequality \eqref{eqfed} holds for $x\in[0, 1-q^{mk+m/2}],$ then it holds for all $x\in[0,1].$
\end{lemma}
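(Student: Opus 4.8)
The plan is to study the ratio $R_k(x):=p_{mk}(q;x)/p_k(q^m;x)$, which by \eqref{pk} is positive on $x\in(0,1)$ (there $p_n(q;x)=x^n(x;q)_\infty/(q;q)_n>0$), and to observe that \eqref{eqfed} is exactly the inequality $R_k(x)\le 1$. At $x=0$ and $x=1$ both sides of \eqref{eqfed} vanish when $k\ge 1$, and for $k=0$ the inequality reduces to $(x;q)_\infty\le(x;q^m)_\infty$, which is clear; so it suffices to fix $k\ge 1$ and establish $R_k\le 1$ on $(0,1)$. The first step is the multiplicative recurrence in $k$, obtained by a direct computation from \eqref{pk} after cancelling $(x;q)_\infty$ and $(x;q^m)_\infty$ and simplifying $(q;q)_{mk+m}/(q;q)_{mk}$ and $(q^m;q^m)_{k+1}/(q^m;q^m)_k$:
\[
R_k(x)=\frac{(q^{mk+1};q)_{m-1}}{x^{m-1}}\,R_{k+1}(x),\qquad\text{i.e.}\qquad \frac{R_{k+1}(x)}{R_k(x)}=\frac{x^{m-1}}{(q^{mk+1};q)_{m-1}}.
\]

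The second ingredient is the elementary estimate $(q^{mk+1};q)_{m-1}=\prod_{s=1}^{m-1}(1-q^{mk+s})\le\bigl(1-q^{mk+m/2}\bigr)^{m-1}$. I would prove it by noting that $u\mapsto\log(1-q^u)$ is concave on $(0,\infty)$ for $q\in(0,1)$ — its second derivative equals $-(\log q)^2q^u/(1-q^u)^2<0$ — and then applying Jensen's inequality to the points $u=mk+1,\dots,mk+m-1$, whose arithmetic mean is exactly $mk+m/2$. Writing $c_k:=(q^{mk+1};q)_{m-1}^{1/(m-1)}$, the estimate says precisely that $c_k\le 1-q^{mk+m/2}$.

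With these two facts I would prove, by induction on $j\ge 0$, the statement: \emph{for every $k\in{\mathbb N}$, one has $R_k(x)\le 1$ for all $x\in[0,1-q^{m(k+j)+m/2}]$}. The case $j=0$ is exactly the hypothesis of the lemma. For the inductive step, fix $k$; applying the inductive hypothesis with index $k+1$ gives $R_{k+1}(x)\le 1$ on $[0,1-q^{m(k+j+1)+m/2}]$, and the recurrence then yields $R_k(x)=\frac{(q^{mk+1};q)_{m-1}}{x^{m-1}}R_{k+1}(x)=(c_k/x)^{m-1}R_{k+1}(x)\le 1$ for every $x$ in that interval with $x\ge c_k$. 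Since $c_k\le 1-q^{mk+m/2}\le 1-q^{m(k+j)+m/2}$, the new piece $[c_k,\,1-q^{m(k+j+1)+m/2}]$ overlaps the interval $[0,1-q^{m(k+j)+m/2}]$ already covered for index $k$, so their union $[0,1-q^{m(k+j+1)+m/2}]$ is covered, completing the induction. Finally, for each fixed $k$ one lets $j\to\infty$: the right endpoint $1-q^{m(k+j)+m/2}$ tends to $1$, hence $R_k\le 1$ on $[0,1)$, and together with the trivial endpoint cases this gives \eqref{eqfed} on all of $[0,1]$.

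The two computations (the recurrence and the concavity/Jensen estimate) are routine; the delicate point is organising the induction. Because extending the validity interval of $R_k$ by one notch consumes the already-extended validity interval of $R_{k+1}$, the inductive statement must be quantified over all $k$ simultaneously, and at each stage one must check that the newly gained piece actually meets the previously known interval — which is exactly the content of the estimate $c_k\le 1-q^{mk+m/2}$.
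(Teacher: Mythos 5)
Your proof is correct and rests on exactly the same two facts as the paper's: the one-step ratio $R_{k+1}(x)/R_k(x)=x^{m-1}\big/\prod_{\ell=1}^{m-1}(1-q^{mk+\ell})$, and the bound $\prod_{\ell=1}^{m-1}(1-q^{mk+\ell})\leqslant(1-q^{mk+m/2})^{m-1}$, which the paper obtains by pairing the factors and AM--GM rather than by Jensen applied to $u\mapsto\log(1-q^u)$. The only difference is bookkeeping: the paper partitions $[0,1)$ into the intervals $[1-\alpha_{k-1},1-\alpha_k]$ on which a given $u_k$ dominates all the others and invokes the hypothesis once per interval, whereas you run a double induction pushing each validity interval toward $1$; the mathematical content is the same.
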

\begin{proof} Clearly, $[0,1]=\{1\}\bigcup_{k=1}^\infty [0, 1-q^{mk+m/2}].$ For $x=1,$ the inequality \eqref{eqfed} is obvious. Now, by \eqref{pk}, inequality \eqref{eqfed} can be expressed as
$$\frac{x^{mk}(x;q)_\infty}{(q;q)_{mk}} \leqslant \frac{x^k(x;q)_\infty}{(q^m;q^m)_k}$$
or, equivalently,
\begin{align}
u_k(x):=x^{(m-1)k}\prod_{j=0}^{k-1}\prod_{\ell=1}^{m-1} \frac{1}{1-q^{\ell+mj}} \leqslant
\prod_{j=0}^{\infty}\prod_{\ell=1}^{m-1} \frac{1}{1-q^{\ell+mj}x}. \label{umk}
\end{align}
Clearly,
$$
\max_{j\in{\mathbb N}} u_j(x)=u_k(x)
$$
on $[1-\alpha_{k-1}, 1-\alpha_k]$ where $\alpha_k$ can be found from the equation $u_{k+1}(x)=u_k(x).$ Therefore, if for every $k\in{\mathbb N},$ \eqref{umk} holds on $[0, 1-\alpha_k],$ then it holds on $[1-\alpha_{k-1}, 1-\alpha_k],$ and, as a result, on $[0,1].$ That is why, \eqref{umk} is going to be proved on $[0,1-q^{mk+m/2}] \supseteq [0, 1-\alpha_k]$ for every $k\in{\mathbb N}.$ To justify this inclusion, one has to show that $\alpha_k \geqslant q^{mk+m/2}.$ Indeed,
\begin{align*}
(1-\alpha_k)^{m-1}&=\prod_{\ell=1}^{m-1}(1-q^{mk+\ell}) \\
&= \sqrt{\prod_{\ell=1}^{m-1}\left[(1-q^{mk+\ell})(1-q^{mk+m-\ell})\right]} \\
&= \prod_{\ell=1}^{m-1} \sqrt{1-q^{mk+\ell}-q^{mk+m-\ell}+q^{2mk+m}} \\
& \leqslant \prod_{\ell=1}^{m-1} \sqrt{(1-q^{mk+m/2})^2} = (1-q^{mk+m/2})^{m-1}
\end{align*}
by virtue of the arithmetic-geometric mean inequality. This completes the proof.
\end{proof}
\begin{lemma}\label{lemrho} Let $\rho(t)=1/(e^t+e^{-t}-2),$ $t>0.$ Then, for all $s, t >0,$ the following inequality is valid:
\begin{align}
\rho(s+t) \leqslant e^{-s}\rho(t). \label{rho}
\end{align}
\end{lemma}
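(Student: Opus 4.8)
The inequality to establish is $\rho(s+t) \leqslant e^{-s}\rho(t)$ for all $s,t>0$, where $\rho(t) = 1/(e^t + e^{-t} - 2) = 1/(e^{t/2} - e^{-t/2})^2$. Rewriting in terms of the denominator, the claim is equivalent to
\begin{align*}
e^{s+t} + e^{-(s+t)} - 2 \geqslant e^{s}\left(e^{t} + e^{-t} - 2\right),
\end{align*}
since both sides of the original inequality are positive and we may cross-multiply. So the plan is to reduce everything to this single elementary inequality in two positive real variables and then prove it directly.

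The first step is to expand and simplify the difference $D(s,t) := \bigl(e^{s+t} + e^{-(s+t)} - 2\bigr) - e^{s}\bigl(e^{t} + e^{-t} - 2\bigr)$. The $e^{s+t}$ terms cancel, leaving
\begin{align*}
D(s,t) = e^{-s-t} - 2 - e^{s-t} + 2e^{s} = 2e^{s} - 2 + e^{-t}\left(e^{-s} - e^{s}\right) = 2(e^{s}-1) - e^{-t}\cdot 2\sinh s \cdot \frac{e^s - e^{-s}}{2\sinh s},
\end{align*}
more cleanly: $D(s,t) = 2(e^s - 1) - e^{-t}(e^s - e^{-s})$. One checks $D(s,0) = 2(e^s-1) - (e^s - e^{-s}) = e^s + e^{-s} - 2 \geqslant 0$, with equality only at $s=0$. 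Then, viewing $t\mapsto D(s,t)$, we have $\partial_t D(s,t) = e^{-t}(e^s - e^{-s}) > 0$ for $s>0$, so $D(s,t)$ is strictly increasing in $t$ on $[0,\infty)$; hence $D(s,t) \geqslant D(s,0) \geqslant 0$ for all $s,t \geqslant 0$. This yields the desired inequality, with strict inequality whenever $s>0$.

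The argument is almost entirely routine; the only mild subtlety is choosing the right normalization of $\rho$ so that the cross-multiplied inequality becomes transparent, and being careful that the cross-multiplication is legitimate — which it is, since $e^t + e^{-t} - 2 = (e^{t/2}-e^{-t/2})^2 > 0$ for all $t>0$, so $\rho$ is finite and positive on $(0,\infty)$. I do not anticipate any real obstacle here; the main point is simply that after clearing denominators the problem collapses to monotonicity in $t$ of an explicit exponential expression, evaluated against its value at $t=0$.
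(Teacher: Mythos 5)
Your proof is correct and follows essentially the same route as the paper's: both clear denominators to reduce the claim to an elementary exponential inequality and then settle it by a one-variable monotonicity argument. The only cosmetic difference is that the paper fixes $t$ and differentiates in $s$ (with equality of the two sides at $s=0$), whereas you fix $s$ and differentiate in $t$ (with the base case $D(s,0)=e^{s}+e^{-s}-2\geqslant 0$).
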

\begin{proof} Equivalently, one may prove that $1/\rho(s+t) \geqslant e^{s}/\rho(t),$ that is, $e^{-t}-2 \leqslant e^{-2s-t}-2e^{-s}.$ If $s=0,$ then both sides are equal, while for $s>0,$ the derivative of the right hand side with respect to $s$ is positive, which yields the statement.
\end{proof}

For the sequel, a special quadrature formula to approximate $\int_a^b f(t)dt$  is needed. More precisely, we set, for $m\geqslant 2,$
\begin{align} \label{qm}
Q_m(f;a,b):=\frac{b-a}{m-1}\sum_{j=1}^{m-1} f\left(a+\frac{b-a}{m}\,j\right).
\end{align}
It is not difficult to see that the quadrature formula gives the exact value of the integral for polynomials of degree at most 1. Denote by $R_{a,b}(f)$ the error in this approximation, i.e.,
\begin{align} \label{err}
R_{a,b}(f)=\int_a^b f(t)dt - Q_m(f;a,b).
\end{align}

\begin{lemma}
The error \eqref{err} is given by
\begin{align}\label{peano}
R_{a,b}(f)=\int_a^b K_{a,b}(t) f''(t)dt
\end{align}
where
\begin{align} \label{kab}
K_{a,b}(t)&=
\begin{cases}
\frac{1}{2}(t-a)^2 & {\rm if } \:\: t\in[a, a+h_1] \\
\frac{1}{2}\left(t-a-\frac{mh_1k}{m-1}\right)^2+\frac{mh_1^2k(m-k-1)}{2(m-1)^2} & {\rm if }\:\: t\in[a+kh_1, a+(k+1)h_1], 1\leqslant k \leqslant m-2\\
\frac{1}{2}(b-t)^2 & {\rm if } \:\: t\in[b-h_1,b]
\end{cases}
\end{align}
and $h_1=\frac{b-a}{m}.$
\end{lemma}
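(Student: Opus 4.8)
The plan is to recognize \eqref{err} as a textbook instance of Peano's kernel theorem for the functional $R_{a,b}$, which by construction annihilates every polynomial of degree at most $1$ (as already observed just before \eqref{err}).

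First I would record, for $f\in C^2[a,b]$, Taylor's formula with integral remainder at $a$,
\[
f(x)=f(a)+f'(a)(x-a)+\int_a^b (x-t)_+\,f''(t)\,dt,\qquad x\in[a,b],
\]
where $(u)_+:=\max\{u,0\}$, and then apply $R_{a,b}$ in the variable $x$. The affine term is annihilated, so
\[
R_{a,b}(f)=R_{a,b}\!\left[\int_a^b (x-t)_+\,f''(t)\,dt\right]=\int_a^b K_{a,b}(t)\,f''(t)\,dt,
\]
the interchange of $R_{a,b}$ with the $t$-integration being legitimate because $R_{a,b}$ is the sum of an integral over $[a,b]$ and finitely many point evaluations, while $(x-t)_+$ is continuous on $[a,b]^2$ (Fubini). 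This already yields the representation \eqref{peano} with
\[
K_{a,b}(t)=\int_a^b (x-t)_+\,dx-\frac{b-a}{m-1}\sum_{j=1}^{m-1}\Bigl(a+\tfrac{b-a}{m}\,j-t\Bigr)_+ .
\]

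Next I would compute this kernel explicitly. Put $h_1=\frac{b-a}{m}$ and $x_j=a+jh_1$; for $t\in[a,b]$ the first term equals $\int_t^b(x-t)\,dx=\tfrac12(b-t)^2$. Fixing $k\in\{0,1,\dots,m-1\}$ and taking $t\in[a+kh_1,a+(k+1)h_1]$, I would use that $(x_j-t)_+=x_j-t$ for $j\geq k+1$ and $(x_j-t)_+=0$ for $j\leq k$, so that $\sum_{j=1}^{m-1}(x_j-t)_+=\sum_{j=k+1}^{m-1}(a+jh_1-t)$. Writing $s=t-a$, using $\sum_{j=k+1}^{m-1} j=\tfrac12[(m-1)m-k(k+1)]$ and $b-a=mh_1$, and collecting terms with the help of the identity $mk+m-k-1=(m-1)(k+1)$, one obtains
\[
K_{a,b}(t)=\tfrac12 s^2-\frac{mh_1 k}{m-1}\,s+\frac{mh_1^2 k(k+1)}{2(m-1)}.
\]
Completing the square in $s=t-a$ converts this into the middle branch of \eqref{kab}; the choice $k=0$ collapses it to $\tfrac12(t-a)^2$ and $k=m-1$ (empty sum) to $\tfrac12(b-t)^2$, giving the first and third branches.

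I do not expect any conceptual obstacle here; the only genuine work is the piecewise bookkeeping in the second step. The one point deserving a remark is consistency at the breakpoints $t=a+kh_1$: since $(x-t)_+$ is continuous in $t$, the kernel $K_{a,b}$ is automatically continuous, so the three quadratic pieces in \eqref{kab} must --- and indeed do --- agree at the common endpoints of their subintervals, assembling into the stated continuous kernel.
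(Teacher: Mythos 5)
Your proposal is correct and takes essentially the same route as the paper: both invoke Peano's kernel theorem for the functional $R_{a,b}$ (which annihilates polynomials of degree at most one) and identify the kernel as $K_{a,b}(t)=R_{a,b}\bigl((x-t)_+\bigr)$, the only difference being that the paper cites the theorem and calls the evaluation of this kernel a ``plain calculation,'' whereas you re-derive the representation from Taylor's formula with integral remainder and carry out the piecewise computation explicitly. Your computation checks out, including the completion of the square and the degenerate cases $k=0$ and $k=m-1$.
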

\begin{proof}
By Peano's Theorem (see, for example \cite[Theorem 3.2.3, page 123]{stoer}), the error is expressed by \eqref{peano}
where $K_{a,b}(t)=R_{a,b}((x-t)_+)$
and
\begin{align*}
(x-t)_+=
\begin{cases}
x-t & {\rm if } \:\: x \geqslant t, \\
0 & {\rm if } \:\: x < t.
\end{cases}
\end{align*}
Here, $(x-t)_+$ is considered as a function of $x.$ Plain calculation of $R_{a,b}((x-t)_+)$ using \eqref{err} results in \eqref{kab}.
\end{proof}

In what follows, given $h>0,$ denote by $K(t)$ the $h$-periodic function on ${\mathbb R}$ such that $K(t)=K_{0,h}(t)$ for $t\in[0,h]$
where $K_{a,b}(t)$ is given by \eqref{kab}. In other words, $K(t+h)=K(t)$ for all $t\in{\mathbb R}$ and
\begin{align} \label{kern}
K(t)=
\frac{1}{2}\, t^2-\frac{hk}{m-1} \, t+\frac{h^2k(k+1)}{2m(m-1)} \quad {\rm for }\:\: t\in[kh_1, (k+1)h_1],\:\: 0\leqslant k \leqslant m-1
\end{align}
where $h_1=h/m.$

\begin{lemma}
For all $m\geqslant 2,$ the following inequality holds:
\begin{align}
\int_0^h K(t) \frac{dt}{t^2} \geqslant 8 \int_h^{3h} K(t) \frac{dt}{t^2}. \label{intk}
\end{align}
\end{lemma}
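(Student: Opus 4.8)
The plan is to reduce the inequality \eqref{intk} to an elementary, if somewhat tedious, comparison of explicit numbers by exploiting the scale invariance of the quantities involved. First I would observe that both sides of \eqref{intk} are invariant under the rescaling $t\mapsto \lambda t$ (simultaneously $h\mapsto \lambda h$): the kernel $K$ satisfies $K_{\lambda h}(\lambda t)=\lambda^2 K_h(t)$ by \eqref{kern}, while $dt/t^2$ contributes a factor $\lambda^{-1}$ and the length of integration a factor $\lambda$, so the $\lambda^2$ cancels. Hence one may normalize $h=m$, which makes the subinterval length $h_1=1$ and turns the breakpoints into the integers $0,1,\dots,m-1$ on $[0,h]$; the function $K$ is then $m$-periodic with $K(t)=\tfrac12 t^2-\tfrac{k}{m-1}t+\tfrac{k(k+1)}{2m(m-1)}$ on $[k,k+1]$.

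Next I would compute the left-hand side $\int_0^m K(t)\,dt/t^2$ and the right-hand side $\int_m^{3m} K(t)\,dt/t^2$ piece by piece over the unit subintervals. On each $[k,k+1]$ (for the left side) and each $[m+k,m+k+1]$, $[2m+k,2m+k+1]$ (for the right side, using $m$-periodicity of $K$ so the numerator is the same quadratic shifted), the integrand is a rational function of the form (quadratic)$/t^2$, which integrates in closed form to a combination of $t$, $\ln t$, and $1/t$ evaluated at integer endpoints. The first subinterval $[0,1]$ on the left deserves separate attention: there $K(t)=\tfrac12 t^2$, so $K(t)/t^2=\tfrac12$ is bounded and the integral is simply $\tfrac12$; the apparent singularity at $t=0$ is harmless. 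After summing, the left-hand side becomes $\tfrac12 + (\text{a sum over }k=1,\dots,m-1)$ and the right-hand side becomes a sum over $k=0,\dots,m-1$ of comparable terms but with denominators shifted up by $m$ and $2m$, hence uniformly smaller.

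The cleanest route past the bookkeeping is a term-by-term domination argument rather than exact summation. For $k\geqslant 1$, the contribution of $[k,k+1]$ to the left side and the contributions of $[m+k,m+k+1]$ and $[2m+k,2m+k+1]$ to the right side all involve the \emph{same} nonnegative quadratic $P_k(u)$ (with $u=t-k$, $t-m-k$, $t-2m-k$ respectively) against $1/t^2$; since $K\geqslant 0$ (it is a Peano kernel built from the quadratic pieces in \eqref{kab}, and the $\tfrac12(t-a)^2$ and $\tfrac12(b-t)^2$ and shifted-square forms are manifestly $\geqslant 0$), and since for $t$ in the second block $1/t^2\leqslant 1/(m+k)^2\leqslant 1/(1+k)^2\cdot 1/m^2\cdot(\dots)$, one gets pointwise bounds like $\int_{m+k}^{m+k+1}K/t^2 \leqslant \tfrac{(k+1)^2}{(m+k)^2}\int_k^{k+1}K/t^2$ and an even stronger bound with $2m+k$. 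Summing the geometric-type decay and adding the isolated $\tfrac12$ from $[0,1]$ on the left, the ratio LHS/RHS is seen to exceed $8$; indeed the $\tfrac12$ term alone already dominates a large part of the right side for small $m$, and for large $m$ the $1/m^2$ suppression on the right makes the inequality very loose.

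\textbf{Main obstacle.} I expect the genuine difficulty to be \emph{uniformity in $m$} near the bad end — the interval $[0,1]$ on the left and the interval $[m,m+1]$ on the right. For small $m$ (notably $m=2$, where the left side is just $\int_0^2 K/t^2$ with $K$ supported by two quadratic pieces and the right side is $\int_2^6 K/t^2$) the constant $8$ is presumably close to sharp, so the crude term-by-term bounds above may not suffice and one must instead evaluate those first few integrals exactly (they are elementary: $\int (\tfrac12 - \tfrac{k}{(m-1)t} + \tfrac{k(k+1)}{2m(m-1)t^2})\,dt$) and check the numerical inequality directly for $m=2,3$, say, before the asymptotic argument takes over for $m\geqslant 4$. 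Splitting into a base case (small $m$, exact computation) and an inductive/asymptotic case (large $m$, where every term on the right carries a factor $\leqslant (k+1)^2/(m+k)^2$ that is $O(1/m)$ or smaller) is the way I would organize it. The positivity $K\geqslant 0$, which makes all the pointwise comparisons legitimate, is the one structural fact I would want to record explicitly at the outset.
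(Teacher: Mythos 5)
Your setup (scale invariance, exact integrability of each piece, positivity of $K$, the isolated contribution $\tfrac12$ from $[0,h_1]$) is sound, but the engine you propose — term-by-term domination of the right-hand blocks by the left-hand blocks — does not work, and not only for small $m$. Writing $L_k=\int_{kh_1}^{(k+1)h_1}K(t)\,dt/t^2$ and $R_{j,k}=\int_{jh+kh_1}^{jh+(k+1)h_1}K(t)\,dt/t^2$, your comparison gives $R_{j,k}\leqslant \frac{(k+1)^2}{(jm+k)^2}\,L_k$, but this ratio is \emph{increasing} in $k$, not geometrically decaying: at $k=m-1$ it tends to $\tfrac14$ for $j=1$ and $\tfrac19$ for $j=2$, so $8\bigl(R_{1,k}+R_{2,k}\bigr)$ is bounded only by roughly $\tfrac{104}{36}L_k\approx 2.9\,L_k$ on a constant fraction of the subintervals, for \emph{every} $m$. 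The claimed ``$1/m^2$ suppression'' holds only for $k=O(1)$; there is no uniform small factor, and in fact $\sum_k\bigl(\frac{(k+1)^2}{(m+k)^2}+\frac{(k+1)^2}{(2m+k)^2}\bigr)L_k$ is asymptotically \emph{equal} to the right-hand side as $m\to\infty$, so the comparison yields nothing without quantitative knowledge of how the mass $\{L_k\}$ is distributed — which is precisely the computation you were trying to avoid. The crude version also fails numerically where you hoped the exact check would be easy: for $m=2$ (normalizing $h=2$) the left side equals $1-\ln 2\cdot 2/2\cdot\ldots$, concretely $2(1-\ln 2)\approx 0.614$, while $8$ times the sum of the four endpoint bounds $\tfrac1{24}+\tfrac1{54}+\tfrac1{96}+\tfrac1{150}$ is $\approx 0.618$, so even the base case needs the exact integrals.

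What the paper does instead is carry out the exact summation you flagged as ``bookkeeping'': setting $I_j=\int_0^h K(t)\,dt/(t+jh)^2$, it integrates each rational piece in closed form and obtains $\frac{(m-1)I_0}{h}=m-1-m\ln m+\ln(m!)$ and an analogous closed form for $I_j$ involving $\ln\bigl[(jm+m)!\bigr]-\ln\bigl[(jm)!\bigr]$ (a pleasant simplification makes the $1/x^2$ contributions to $I_j$ collapse to exactly $h/2$). Two-sided Stirling bounds then reduce $I_0\geqslant 8(I_1+I_2)$ to the positivity of an explicit elementary function $\theta(m)=\ln\sqrt{2\pi m}+\frac{9}{12m+1}-\frac{2}{9m}+15-20\ln 3+8\ln 2$, which is checked by showing $\theta'>0$ and $\theta(2)>0$. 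So the honest route is: do the exact sums for all $m$ at once, and let Stirling supply the uniformity in $m$ that your domination argument cannot. Your instinct that the $[0,h_1]$ term carries most of the weight is correct and could in principle be exploited (one can check that $8(I_1+I_2)<h/(2m)=L_0$ for all $m\geqslant 2$), but establishing that still requires the same closed-form evaluation of $I_1+I_2$, so it does not shorten the proof.
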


\begin{proof}

For $j=0,1,2,$ set $$I_j:=\int_0^h \frac{K(t)}{(t+jh)^2}dt.$$
The inequality \eqref{intk} is equivalent to $I_0\geqslant 8(I_1+I_2).$
Now,
\begin{align*}
I_0 &=\int_{0}^{h} \frac{K(t)}{t^2} \, dt = \int_{0}^{h_1} \frac{K(t)}{t^2}\,dt+\sum_{k=1}^{m-1} \int_{kh_1}^{(k+1)h_1} \frac{K(t)}{t^2}\,dt \\
&=\frac{h}{2m}+\sum_{k=1}^{m-1}\left[\frac{h}{2m}-\frac{hk}{m-1}\ln\left(1+\frac{1}{k}\right)+\frac{h^2k(k+1)}{2m(m-1)}\left(\frac{1}{kh_1}-\frac{1}{(k+1)h_1}\right)\right]\\
&=h-\frac{h}{m-1}\sum_{k=1}^{m-1} k\ln\left(1+\frac{1}{k}\right)
\end{align*}
which gives
\begin{align*}
\frac{(m-1)I_0}{h} = m-1-m\ln m + \ln(m!).
\end{align*}
On the other hand, for $j\geqslant 1,$ using \eqref{kern} and the substitution $x=t+jh,$ one obtains
\begin{align*}
I_j &=\int_{0}^{h} \frac{K(t)}{(t+jh)^2} \, dt = \sum_{k=0}^{m-1} \int_{kh_1}^{(k+1)h_1} \frac{K(t)}{(t+jh)^2} \, dt.\\
&=\sum_{k=0}^{m-1} \int_{jh+kh_1}^{jh+(k+1)h_1} \left[\frac{1}{2}-h\left(j+\frac{k}{m-1}\right)\, \frac{1}{x}+h^2\left(\frac{j^2}{2}+\frac{jk}{m-1}+ \frac{k(k+1)}{2m(m-1)}\right)\frac{1}{x^2}\right]\,dx\\
&=\frac{h}{2}-h\sum_{k=0}^{m-1}\left(j+\frac{k}{m-1}\right)\ln\left(1+\frac{1}{jm+k}\right)+S_j
\end{align*}
where
\begin{align*}
S_j &= \frac{h^2}{h_1}\sum_{k=0}^{m-1}\left(\frac{j^2}{2}+\frac{jk}{m-1}+ \frac{k(k+1)}{2m(m-1)}\right)\left(\frac{1}{jm+k}-\frac{1}{jm+k+1}\right)\\
&=\frac{h}{2}\left[j^2\left(\frac{1}{j}-\frac{1}{j+1}\right)+\frac{1}{m-1}\sum_{k=0}^{m-1}[2mjk+k(k+1)]\left(\frac{1}{jm+k}-\frac{1}{jm+k+1}\right)\right]\\
&=\frac{h}{2}\left\{\frac{j}{j+1}+\frac{1}{m-1}\sum_{k=0}^{m-1}\left[1-jm(jm+1)\left(\frac{1}{jm+k}-\frac{1}{jm+k+1}\right)\right]\right\}\\
&=\frac{h}{2}\left[\frac{j}{j+1}+\frac{m}{m-1}-\frac{jm(jm+1)}{m-1}\left(\frac{1}{jm}-\frac{1}{jm+m}\right)\right]
=\frac{h}{2}.
\end{align*}
Therefore,
\begin{align*}
I_j =h-h\sum_{k=0}^{m-1}\left(j+\frac{k}{m-1}\right)\ln\left(1+\frac{1}{jm+k}\right)
\end{align*}
or
\begin{align*}
\frac{I_j}{h} =1-j\ln\left(1+\frac{1}{j}\right)-\frac{m}{m-1}\ln(jm+m)+\frac{\ln[(jm+m)!]-\ln[(jm)!]}{m-1}.
\end{align*}
With the help of Stirling's formula
\begin{align*}
\sqrt{2\pi} n^{n+1/2} e^{-n+1/(12n+1)} < n! < \sqrt{2\pi} n^{n+1/2} e^{-n+1/(12n)},
\end{align*}
one gets
\begin{align*}
\frac{I_1+I_2}{h} &= 2+\ln 2 -2\ln 3 - \frac{m[\ln(2m)+\ln(3m)]}{m-1}+
\frac{\ln[(3m)!]-\ln(m!)]}{m-1} \\
& \leqslant 2+\ln 2 -2\ln 3 - \frac{m[\ln(2m)+\ln(3m)]}{m-1}\\
& \qquad + \frac{1}{m-1}\left[\left(3m+\frac{1}{2}\right)\ln(3m)-\left(m+\frac{1}{2}\right)\ln(m)-2m+\frac{1}{36m}-\frac{1}{12m+1}\right]
\end{align*}
and as a result
\begin{align*}
\frac{(m-1)(I_1+I_2)}{h} \leqslant -2+\frac{5}{2} \ln 3 - \ln 2 + \frac{1}{36m}-\frac{1}{12m+1},
\end{align*}
while
\begin{align*}
\frac{(m-1)I_0}{h} \geqslant -1 + \ln{\sqrt{2\pi m}} + \frac{1}{12m+1}.
\end{align*}
The needed inequality $I_0\geqslant 8(I_1+I_2)$ for all $m\geqslant 2$ follows from the fact that
\begin{align*}
-1+ \ln\sqrt{2\pi m} +\frac{1}{12m+1} \geqslant 8\left(-2 + \frac{5}{2} \ln 3 - \ln 2 + \frac{1}{36m} - \frac{1}{12m+1}\right),
\end{align*}
or equivalently,
\begin{align*}
\ln\sqrt{2\pi m} +\frac{9}{12m+1} - \frac{2}{9m} + 15 - 20 \ln 3 + 8\ln 2  \geqslant 0.
\end{align*}
To see this, let
\begin{align*}
\theta(x)= \ln\sqrt{2\pi x} +\frac{9}{12x+1} - \frac{2}{9x} + 15 - 20 \ln 3 + 8\ln 2.
\end{align*}
Then,
\begin{align*}
\theta'(x)= \frac{1}{2x} - \frac{108}{(12x+1)^2} + \frac{2}{9x^2} > \frac{1}{2x} - \frac{108}{(12x)^2} + \frac{2}{9x^2}
= \frac{18x-19}{36x^2} > 0 \quad \text{for} \quad x\geqslant 2.
\end{align*}
Hence, for all $m\geqslant 2,$ one has:
$$\theta(m)\geqslant \theta(2) \approx 0.0073,$$
which completes the proof.
\end{proof}
\begin{remark} In the case $m=2,$ an alternative proof is presented in \cite{fedja}.
\end{remark}
\begin{corollary}
Let $\rho(t)$ be the function from Lemma \ref{lemrho}. Then
$$\int_0^h K(t) \rho(t) dt \geqslant 8 \int_h^{3h} K(t) \rho(t) dt.$$
\end{corollary}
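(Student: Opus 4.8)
The plan is to transport both sides of the inequality to the interval $[0,h]$ using the $h$-periodicity of $K$, and then to deduce the weighted estimate from \eqref{intk} by means of a monotonicity property of $\rho$.

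First I would record that $K(t)\ge 0$ for every $t$: on each sub-interval the quadratic in \eqref{kern} has non-positive discriminant (vanishing precisely at the endpoints belonging to $h\mathbb{Z}$), so $K\ge 0$. Since $K(t+h)=K(t+2h)=K(t)$, the substitutions $t\mapsto t+h$ and $t\mapsto t+2h$ turn both the target inequality and \eqref{intk} into statements over $[0,h]$. With
\[
\psi(t):=\frac{1}{t^2}-\frac{8}{(t+h)^2}-\frac{8}{(t+2h)^2},\qquad \Phi(t):=\rho(t)-8\rho(t+h)-8\rho(t+2h),
\]
inequality \eqref{intk} becomes $\int_0^h K(t)\psi(t)\,dt\ge 0$, while the quantity to be shown non-negative is $\int_0^h K(t)\,\Phi(t)\,dt$.

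The bridge between $\Phi$ and $\psi$ is the function $g(t):=t^2\rho(t)=\bigl((t/2)/\sinh(t/2)\bigr)^2$, which is positive and decreasing on $(0,\infty)$ with $g(0^+)=1$, because $u\mapsto\sinh u/u$ is increasing. Writing $\rho(t)=g(t)/t^2$ and using $g(t+h),\,g(t+2h)\le g(t)$ gives the pointwise inequality $\Phi(t)\ge g(t)\psi(t)$, hence $\int_0^h K\Phi\ge\int_0^h Kg\psi$ because $K\ge 0$. So it remains to see that $\int_0^h K(t)g(t)\psi(t)\,dt\ge 0$, and here I would invoke the classical ``single sign change against a monotone weight'' principle: $\psi$ changes sign exactly once on $(0,h)$, from $+$ to $-$, since $s^2\psi(hs)=1-8s^2(s+1)^{-2}-8s^2(s+2)^{-2}$ is strictly decreasing, equal to $1$ at $s=0$ and negative at $s=1$. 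Denoting this sign change by $t_0$, the inequalities $g(t)\ge g(t_0)$ for $t<t_0$ and $g(t)\le g(t_0)$ for $t>t_0$, together with $K\ge 0$, yield $K(t)g(t)\psi(t)\ge g(t_0)\,K(t)\psi(t)$ on all of $[0,h]$, so that $\int_0^h Kg\psi\ge g(t_0)\int_0^h K\psi\ge 0$ by \eqref{intk}.

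The parts requiring genuine care are the two structural inputs: the monotonicity of $t^2\rho(t)$ and the single $(+\to-)$ sign change of $\psi$. It is worth noting why a cheaper route fails: bounding $\rho(t+h)\le e^{-h}\rho(t)$ and $\rho(t+2h)\le e^{-2h}\rho(t)$ via Lemma \ref{lemrho} only gives $8\int_h^{3h}K\rho\le 8(e^{-h}+e^{-2h})\int_0^h K\rho$, which is useless when $h$ is small; the monotonicity of $t^2\rho(t)$ is precisely what makes the comparison with \eqref{intk} work uniformly in $h$ and $m$.
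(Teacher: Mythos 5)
Your argument is correct and rests on exactly the same two ingredients as the paper's one-line proof: inequality \eqref{intk} together with the monotonicity of $t\mapsto t^{2}\rho(t)$ (stated in the paper as the increase of $1/(t^{2}\rho(t))$). The paper reaches the conclusion more directly by comparing $g(t)=t^{2}\rho(t)$ with its value at the single point $t=h$: since $g$ is decreasing, $g\geqslant g(h)$ on $[0,h]$ and $g\leqslant g(h)$ on $[h,3h]$, so $\int_0^h K\rho \geqslant g(h)\int_0^h K/t^{2} \geqslant 8g(h)\int_h^{3h} K/t^{2} \geqslant 8\int_h^{3h} K\rho$ — which makes the periodization, the auxiliary function $\psi$, and the localization of its sign change $t_0$ unnecessary.
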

\begin{proof}
The statement is a consequence of the fact that $t\mapsto \frac{1}{t^2 \rho(t)}=\frac{e^{t}+e^{-t}-2}{t^2}$ is an increasing function for $t>0.$
\end{proof}
\begin{corollary} \label{corh0}
For $h \leqslant h_0:=\ln 4,$
$$\int_0^h K(t) \rho(t) dt \geqslant e^{3h/2} \int_h^{3h} K(t) \rho(t) dt.$$
\end{corollary}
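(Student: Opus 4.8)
The plan is to obtain this as an immediate consequence of the previous corollary together with the elementary observation that the hypothesis $h\leqslant h_0=\ln 4$ is precisely what is needed to replace the constant $8$ by $e^{3h/2}$.

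First I would record that the integrand $K(t)\rho(t)$ is nonnegative on $(0,3h)$: the kernel $K$ defined via \eqref{kab}--\eqref{kern} is a sum of a square and a nonnegative constant on each subinterval (the constant $\frac{mh_1^2k(m-k-1)}{2(m-1)^2}$ is nonnegative because $1\leqslant k\leqslant m-2$), hence $K\geqslant 0$, while $\rho(t)=1/(e^t+e^{-t}-2)=1/(e^{t/2}-e^{-t/2})^2>0$ for $t>0$. Consequently $\int_h^{3h}K(t)\rho(t)\,dt\geqslant 0$.

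Next I would compute the numerical value of $e^{3h_0/2}$. Since $h_0=\ln 4$, one has $e^{3h_0/2}=\big(e^{\ln 4}\big)^{3/2}=4^{3/2}=8$, and therefore $e^{3h/2}\leqslant e^{3h_0/2}=8$ for every $h\leqslant h_0$. Combining this with the nonnegativity of $\int_h^{3h}K\rho$ gives
\begin{align*}
e^{3h/2}\int_h^{3h}K(t)\rho(t)\,dt\leqslant 8\int_h^{3h}K(t)\rho(t)\,dt,
\end{align*}
and the right-hand side is at most $\int_0^h K(t)\rho(t)\,dt$ by the preceding Corollary. Chaining the two inequalities yields the claim.

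There is essentially no obstacle here: the substantive estimate $I_0\geqslant 8(I_1+I_2)$ has already been established in the lemma on \eqref{intk} and transferred to $\rho$ in the previous corollary, so the only new ingredient is the arithmetic identity $4^{3/2}=8$, which fixes the threshold $h_0=\ln 4$. If one wanted a marginally sharper statement, the same argument shows the inequality holds with $e^{3h/2}$ replaced by any constant $c(h)\leqslant 8$, but $e^{3h/2}$ is the form convenient for the subsequent application (presumably combined with Lemma \ref{lemrho} to iterate over successive blocks of length $2h$).
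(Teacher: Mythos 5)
Your proposal is correct and is exactly the intended deduction: the paper states this corollary without proof, leaving implicit precisely the argument you give, namely that $e^{3h/2}\leqslant e^{3h_0/2}=4^{3/2}=8$ together with the nonnegativity of $K\rho$ reduces the claim to the preceding corollary's bound $\int_0^h K\rho\,dt\geqslant 8\int_h^{3h}K\rho\,dt$. Nothing further is needed.
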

Next, for a given $f:[a,b]\to {\mathbb R},$ denote by $E_{a,b}$ the error in the composite quadrature formula to approximate $\int_a^b f(t) dt$ when the interval $[a,b]$ is divided into $n$ subintervals of equal length $h$ and the rule \eqref{qm} is applied on each subinterval. That is,
\begin{align}
E_{a,b}=\int_a^b f(t)dt- \sum_{j=1}^{n} Q_m(f;a+(j-1)h, a+jh), \label{errab}
\end{align}
where $h=(b-a)/n.$ If $b=\infty,$ we take $n=\infty.$

\begin{lemma} \label{lemeras}
Let $f(t)=-\ln(1-e^{-t}),$ $t>0.$ Then, for all $a>0$ and any step size, one obtains:
\begin{align*}
E_{s+a,\infty} \leqslant e^{-s} E_{a,\infty}.
\end{align*}
\end{lemma}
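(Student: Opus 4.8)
The plan is to reduce the whole statement to the single identity $f''(t)=\rho(t)$, where $\rho$ is the function of Lemma \ref{lemrho}, and then invoke that lemma. So the first step is a one-line computation: differentiating $f(t)=-\ln(1-e^{-t})$ gives $f'(t)=-e^{-t}/(1-e^{-t})=-1/(e^t-1)$, whence $f''(t)=e^t/(e^t-1)^2=1/(e^t+e^{-t}-2)=\rho(t)$. In particular $f''>0$ on $(0,\infty)$ and $f''(t)=\rho(t)=O(e^{-t})$ as $t\to\infty$.

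Next I would rewrite the composite quadrature error \eqref{errab} as a single integral against the periodic Peano kernel. Fix a step size $h>0$. On the panel $[a+(j-1)h,\,a+jh]$ the single-rule error \eqref{qm} equals, by the Peano representation \eqref{peano}, the integral of $K_{a+(j-1)h,\,a+jh}(t)\,f''(t)$ over that panel; moreover $K_{a',\,a'+h}(t)=K_{0,h}(t-a')$ from the explicit formula \eqref{kab}, and since $K$ is the $h$-periodic extension of $K_{0,h}$ we get $K_{a+(j-1)h,\,a+jh}(t)=K(t-a)$ for $t$ in that panel. Summing over $j\ge 1$ (the series converges by the exponential decay of $\rho$) yields
$$E_{a,\infty}=\int_a^\infty K(t-a)\,\rho(t)\,dt ,$$
and the identical formula holds with $a$ replaced by $s+a$ using the same step size $h$, namely $E_{s+a,\infty}=\int_{s+a}^\infty K(t-s-a)\,\rho(t)\,dt$.

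Finally I would substitute $t=\tau+s$ in the last integral to obtain $E_{s+a,\infty}=\int_a^\infty K(\tau-a)\,\rho(\tau+s)\,d\tau$. Now $K\ge 0$ everywhere — immediate from \eqref{kab}, since each branch is a square plus a nonnegative constant (on the middle panels $k\ge 1$ and $m-k-1\ge 1$) — and Lemma \ref{lemrho} gives $\rho(\tau+s)\le e^{-s}\rho(\tau)$ for every $\tau>0$; on the range of integration $\tau\ge a>0$, so the hypothesis of that lemma is met. Therefore $E_{s+a,\infty}\le e^{-s}\int_a^\infty K(\tau-a)\,\rho(\tau)\,d\tau=e^{-s}E_{a,\infty}$, which is the assertion.

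The computations are routine; the only point requiring care is the bookkeeping that identifies the composite error with $\int_a^\infty K(t-a)\,\rho(t)\,dt$ via the periodicity of $K$, together with checking that the defining series and these improper integrals converge — both of which follow from $\rho(t)=O(e^{-t})$. If there is a "main obstacle" at all, it is simply the recognition that $f''$ is exactly the function $\rho$ of Lemma \ref{lemrho}, which is precisely what turns the estimate into a one-line consequence of $K\ge 0$ and $\rho(\tau+s)\le e^{-s}\rho(\tau)$.
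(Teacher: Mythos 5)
Your proposal is correct and follows essentially the same route as the paper: both express $E_{a,\infty}$ as $\int_a^\infty K(t-a)\,f''(t)\,dt$ via the Peano kernel, identify $f''=\rho$, shift the variable, and apply Lemma \ref{lemrho} together with $K\geqslant 0$. Your write-up merely makes explicit the bookkeeping (periodicity of $K$, convergence of the improper integrals) that the paper leaves implicit.
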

\begin{proof}
By Peano's Theorem on the integral representation of the error term, $$E_{s,\infty}=\int_s^{\infty} f''(t) K(t-s)dt,$$ where $K(t)$ is defined by \eqref{kern}. Since $f''(t)=\rho(t)$, from Lemma \ref{lemrho}, application of \eqref{rho} yields:
\begin{align*}
E_{s+a,\infty}=\int_{s+a}^{\infty} \rho(t) K(t-s-a) dt =\int_{a}^{\infty} \rho(t+s) K(t-a) dt \leqslant e^{-s} \int_a^\infty \rho(t)K(t-a) dt =e^{-s}E_{a,\infty}.
\end{align*}
\end{proof}
\begin{lemma} \label{intst}
If $f(t)=-\ln(1-e^{-t}),$ $t>0,$ then for all $S,T>0,$ there holds:
$$\int_S^\infty f(t) dt \geqslant -ST + \int_0^T f(t)dt.$$
\end{lemma}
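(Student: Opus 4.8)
The plan is to exploit a special feature of this $f$: it is an \emph{involution}. Indeed, a one-line computation gives, for every $t>0$,
$$f(f(t)) = -\ln\bigl(1 - e^{-f(t)}\bigr) = -\ln\bigl(1 - (1 - e^{-t})\bigr) = -\ln e^{-t} = t,$$
and $f$ is a continuous, strictly decreasing bijection of $(0,\infty)$ onto itself (with $f(t)\to+\infty$ as $t\to0^+$ and $f(t)\to0^+$ as $t\to\infty$), integrable near both endpoints because $f(t)\sim-\ln t$ at $0$ and $f(t)\sim e^{-t}$ at $\infty$.

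First I would transform the tail integral. Using the substitution $t=f(u)$ — valid since $f$ is a decreasing $C^1$-bijection, with $t=S$ corresponding to $u=f(S)$ and $t\to\infty$ to $u\to0^+$ — together with the involution property $f(f(u))=u$, one obtains $\int_S^\infty f(t)\,dt = -\int_0^{f(S)} u\,f'(u)\,du$; an integration by parts (the boundary term at $u=0$ vanishes since $u\,f(u)\to0$) then yields the identity
$$\int_S^\infty f(t)\,dt = -S\,f(S) + \int_0^{f(S)} f(u)\,du,$$
where $f(f(S))=S$ has been used once more.

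Plugging this into the desired inequality reduces it to $S\bigl(T-f(S)\bigr) \geq \int_{f(S)}^T f(u)\,du$, i.e. to $\int_{f(S)}^T \bigl(f(u)-f(f(S))\bigr)\,du \leq 0$. This is now immediate from the monotonicity of $f$ alone: if $T\geq f(S)$, then $f(u)\leq f(f(S))$ throughout $[f(S),T]$, so the integrand is nonpositive and the integral is $\leq 0$; if $T<f(S)$, then $f(u)\geq f(f(S))$ throughout $[T,f(S)]$, so the integral over $[T,f(S)]$ is nonnegative and hence the oriented integral from $f(S)$ to $T$ is nonpositive. Either way the claim follows.

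I do not anticipate a genuine obstacle. The only point needing minor care is justifying the change of variables and the integration by parts for this improper integral, which is routine given the logarithmic/exponential asymptotics of $f$ at $0$ and $\infty$. The real content is the observation that $-\ln(1-e^{-t})$ is its own inverse; without it one could instead fix $T$ and minimize the left-minus-right difference over $S$ — the minimum occurring where $f(S)=T$ — and be led to the same computation with somewhat more bookkeeping.
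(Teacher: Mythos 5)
Your proof is correct and rests on exactly the same key fact as the paper's: that $f$ is a decreasing involution, i.e.\ its graph is symmetric about the line $y=x$. The paper disposes of the lemma with a one-line geometric remark to this effect, and your substitution-plus-integration-by-parts argument is simply a rigorous analytic rendering of that same observation.
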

\begin{proof}
It can be observed geometrically since $f(t)$ is a decreasing continuous function symmetric about the line $y=x.$
\end{proof}

\section{Proofs of  the main results}

\begin{proof}[Proof of Theorem \ref{thmfed}]
As it was done in the proof of Lemma \ref{lemint}, inequality \eqref{eqfed} is equivalent to
\begin{align*}
x^{(m-1)k}\prod_{j=0}^{k-1}\prod_{\ell=1}^{m-1} \frac{1}{1-q^{\ell+mj}} \leqslant
\prod_{j=0}^{\infty}\prod_{\ell=1}^{m-1} \frac{1}{1-q^{\ell+mj}x}.
\end{align*}
Taking the logarithm of both sides leads to
\begin{align*}
-(m-1)k\ln\left(\frac{1}{x}\right)+\sum_{j=0}^{k-1}\sum_{\ell=1}^{m-1} \ln\left(\frac{1}{1-q^{\ell+mj}}\right) \leqslant
\sum_{j=0}^{\infty}\sum_{\ell=1}^{m-1} \ln\left(\frac{1}{1-q^{\ell+mj}x}\right).
\end{align*}
Set $h=\ln(1/q^m),$ $S=\ln(1/x),$ $T=kh$ and $f(t)=-\ln(1-e^{-t}).$ Then the inequality becomes
\begin{align*}
-ST+\sum_{j=0}^{k-1} \frac{h}{m-1}\sum_{\ell=1}^{m-1}f(jh+\frac{h}{m}\ell)
\leqslant \sum_{j=0}^{\infty} \frac{h}{m-1}\sum_{\ell=1}^{m-1}f(S+jh+\frac{h}{m}\ell)
\end{align*}
which can be written as
\begin{align*}
-ST+\sum_{j=0}^{k-1} Q_m(f;jh,(j+1)h)
\leqslant \sum_{j=0}^{\infty} Q_m(f;S+jh,S+(j+1)h).
\end{align*}
The sums in the last inequality can be viewed as the composite quadrature formulas for the integrals $\int_0^T f(t)dt$ and $\int_S^\infty f(t)dt,$ respectively.
Therefore, by \eqref{errab}, one has
\begin{align*}
-ST+\int_0^T f(t)dt - E_{0,T}
\leqslant \int_S^\infty f(t)dt - E_{S,\infty}.
\end{align*}
Using Lemma \ref{intst}, if one can show that
\begin{align}
E_{0,T} \geqslant E_{S,\infty} \label{ineq}
\end{align}
for $h\leqslant h_0,$ the proof will be complete for $q\geqslant 1/2.$ Due to Lemma \ref{lemint}, we need only to deal with the case $x\in[0,1-q^{mk+m/2}]$ or
\begin{align}
e^{-S} \leqslant 1-e^{-T-h/2}. \label{est}
\end{align}
By Lemma \ref{lemeras}, one derives that $E_{S,\infty} \leqslant e^{-S}E_{0,\infty}$ and also
$E_{T,\infty} \leqslant e^{-T+h} E_{h,\infty}$
As $f''(t)=\rho(t),$ using Corollary \ref{corh0} along with Peano's Theorem implies that
$E_{h,3h}\leqslant e^{-3h/2}E_{0,h}$ whenever $h\leqslant h_0.$ Thence,
\begin{align*}
E_{h,\infty} = E_{h,3h}+E_{3h,\infty} \leqslant e^{-3h/2} E_{0,h}+e^{-2h}E_{h,\infty} \leqslant e^{-3h/2}E_{0,\infty},
\end{align*}
whence
\begin{align*}
E_{0,T}=E_{0,\infty}-E_{T,\infty} \geqslant \left(1-e^{-T-h/2}\right)E_{0,\infty} \geqslant e^{-S}E_{0,\infty} \geqslant E_{S,\infty}
\end{align*}
due to \eqref{est}. This is the desired inequality \eqref{ineq}. To finish the proof, we observe that for all $q\in(0, \frac{1}{2}),$ $x\in[0,1]$ and $i\in{\mathbb N},$ it is true that
$$\frac{x}{1-q^i} \leqslant \frac{1}{1-q^ix}$$
and thence, for $q\in(0, \frac{1}{2})$ and $x\in[0,1]$
$$
\prod_{j=0}^{k-1}\prod_{\ell=1}^{m-1} \frac{x}{1-q^{\ell+mj}}
\leqslant
\prod_{j=0}^{k-1}\prod_{\ell=1}^{m-1} \frac{1}{1-q^{\ell+mj}x}
\leqslant
\prod_{j=0}^{\infty}\prod_{\ell=1}^{m-1} \frac{1}{1-q^{\ell+mj}x}.
$$
The proof is complete.
\end{proof}

\begin{proof}[Proof of Theorem \ref{thmqr2}]
For given $\varepsilon>0,$ one can find $\delta>0$ such that $p_0(q;x)<\varepsilon/4$ for $x\in[1-2\delta, 1-\delta].$
Because of \eqref{sumpk}, for $N\in{\mathbb N},$ one may write:
\begin{align*}
\sum_{k=1}^N p_k(q;x) = 1 - p_0(q;x)-\sum_{k=N+1}^{\infty} p_k(q;x), \quad x\in[0,1).
\end{align*}
Notice that the series in \eqref{sumpk} converges uniformly on any closed subinterval of $[0,1),$
in particular, on $[0,1-\delta].$ Therefore, there exists $N_0\in{\mathbb N},$ such that
\begin{align*}
\sum_{k=N+1}^\infty p_k(q;x) < \frac{\varepsilon}{4}, \quad \text{for all} \quad x\in[0,1-\delta], \quad N>N_0.
\end{align*}
Hence, on $[1-2\delta, 1-\delta],$ there holds:
\begin{align}
\sum_{k=1}^N p_k(q;x) > 1-\frac{\varepsilon}{2}. \label{1Npk}
\end{align}
Apply this procedure to find $N_1, N_2$ and $\delta$ satisfying both
\begin{align*}
\sum_{k=1}^{N_1} p_k(q;x) > 1-\frac{\varepsilon}{2} \quad \text{and} \quad
\sum_{k=1}^{N_2} p_k(r;x) > 1-\frac{\varepsilon}{2}
\end{align*}
for $x\in[1-2\delta, 1-\delta].$ Setting $N=\max\{N_1, N_2\},$ one obtains
\begin{align*}
\sum_{k=1}^{N} [p_k(q;x)+p_k(r;x)] > 2-\varepsilon \quad \text{for} \quad x\in[1-2\delta, 1-\delta].
\end{align*}
At this point,   
for every $N\in \mathbb{N},$ consider a function $f_N\in C[0,1]$ such that $\|f_N\|=1$ and
$$\begin{array}{ll}
f_N(1-q^k) = 1 & \mathrm{when} \;\; \:\: k = 1,2,\dots ,  N,\\
f_N(1-r^k) = -1 & \mathrm{when} \;\; \:\: k =1,2,\dots ,  N,\\
f_N(1-q^k) = f_N(1-r^k)= 0 & \mathrm{when}\;\; k \neq 1,2,\dots ,  N,.
\end{array}
$$
Then,
$$
\left(B_q-B_r\right)f(x)=\sum_{k=1}^{N} [p_k(q;x)+p_k(r;x)] > 2-\varepsilon \quad \text{for} \quad x\in[1-2\delta, 1-\delta].
$$
Since,
$$
\|B_q-B_r\| \geqslant \|\left(B_q-B_r\right)f(x)\| \geqslant 2-\varepsilon
$$
and as $\varepsilon >0$ has been chosen arbitrarily, one concludes that $\|B_q-B_r\| \geqslant 2.$
On the other hand, by the triangle inequality, one has $\|B_q-B_r\| \leqslant 2,$ and the statement follows.
\end{proof}

\begin{proof}[Proof of Theorem \ref{thmBqrmj}]

Let $f\in C[0,1].$ Then, for $x\in [0,1),$
\begin{align*}
\left(B_q-B_r\right)f(x)
&=\sum_{k=0}^\infty f(1-q^k) p_k(q;x)-\sum_{k=0}^\infty f(1-r^k) p_k(r;x) \\
&=\sum_{m|k} f(1-q^k) p_k(q;x)+\sum_{m\nmid k} f(1-q^k) p_k(q;x) \\
& \qquad -\sum_{j|k} f(1-r^k) p_k(r;x)-\sum_{j\nmid k} f(1-r^k) p_k(r;x) \\
&=\sum_{k=0}^\infty f(1-q^{mk}) \left[p_{mk}(q;x)-p_{jk}(r;x)\right] \\
&\qquad +\sum_{m\nmid k} f(1-q^k) p_k(q;x)-\sum_{j\nmid k} f(1-r^k) p_k(r;x)
\end{align*}
For each $N\in \mathbb{N}$, choose $f_N\in C[0,1]$ with $\|f_N\|=1$ in such a way that
$$\begin{array}{ll}
f_N(1-r^k) = -1 & \mathrm{for} \;\;j\nmid k, \:\: k \leqslant N,\\
f_N(1-q^k) = -1 & \mathrm{for} \;\;m | k, \:\: k \leqslant N,\\
f_N(1-q^k) = 1 & \mathrm{for}\;\;m\nmid k,\:\:  k \leqslant N,\\
f_N(1-q^k) = f_N(1-r^k)= 0 & \mathrm{for}\;\; k > N.
\end{array}
$$
Then,
\begin{align*}
\left(B_q-B_r\right)f_N(x)
&=\sum_{k=0}^N p_k(q;x)+\sum_{k=0}^N p_k(r;x)- 2\sum_{k=0}^{\lfloor N/m \rfloor} p_{mk}(q;x).
\end{align*}
Following the line of reasoning in the preceding proof and bearing in mind \eqref{limpmk}, one may opt for $\delta >0$ and $N\in \mathbb{N}$ such that, for every $\varepsilon >0,$
$$\sum_{k=0}^N p_k(q;x)+\sum_{k=0}^N p_k(r;x)- 2\sum_{k=0}^{\lfloor N/m \rfloor} p_{mk}(q;x) \geqslant 2-\frac{2}{m}-\varepsilon\quad \text{when} \quad x\in[1-2\delta, 1-\delta].$$ The statement is now  immediate.
\end{proof}

\begin{proof}[Proof of Theorem \ref{thmBqr1}]
Let $f\in C[0,1].$ Then
\begin{align*}
\left(B_q-B_r\right)f(x)
&=\sum_{k=0}^\infty f(1-q^k) p_k(q;x)-\sum_{k=0}^\infty f(1-r^k) p_k(r;x) \\
&=\sum_{m|k} f(1-q^k) p_k(q;x)+\sum_{m\nmid k} f(1-q^k) p_k(q;x)-\sum_{k=0}^\infty f(1-q^{mk}) p_k(q^m;x) \\
&=\sum_{k=0}^\infty f(1-q^{mk}) \left[p_{mk}(q;x)-p_{k}(q^m;x)\right]+\sum_{m\nmid k} f(1-q^k) p_k(q^m;x)
\end{align*}
Using Theorem \ref{thmfed} and \eqref{sumpk}, the last inequality becomes
\begin{align*}
\left|\left(B_q-B_r\right)f(x)\right|
&\leqslant 2\|f\|\left(1-\sum_{k=0}^\infty p_{mk}(q;x)\right), \quad \text{for} \quad x\in[0, 1).
\end{align*}
Now, by \eqref{limpmk}, for all $\varepsilon >0$ there exists $\delta>0$ such that
$$
\sum_{k=0}^\infty p_{mk}(q;x) > \frac{1}{m}-\frac{\varepsilon}{2}, \quad \text{for} \quad x\in[1-2\delta, 1-\delta].
$$
Therefore,
\begin{align*}
\left|\left(B_q-B_r\right)f(x)\right|
&\leqslant 2\|f\|\left(1-\frac{1}{m}+\frac{\varepsilon}{2}\right), \quad \text{for} \quad x\in[1-2\delta, 1-\delta].
\end{align*}
This implies that $\|B_{q}-B_{r}\| \leqslant 2-2/m.$ Together with Theorem \ref{thmBqrmj}, this yields the statement.
\end{proof}

\section*{Acknowledgments}
During the work on this paper, the authors were lucky to see the discussion on Mathoverflow \cite{fedja} concerning  inequality \eqref{eqfed} in the case $m=2.$ We would like to express our sincere gratitude to all MO users,   who participated in this fruitful discussion, especially to the user whose nickname is `fedja' and whose grasp of the subject was of a significant inspiration.
We are also pleased to thank Prof. Alexandre Eremenko (Purdue University, USA) for his encouragement and valuable help throughout the entire process of our work.


\begin{thebibliography}{99}


\bibitem{manal}  M. M. Almesbahi, {\it On Properties of $q$-Bernstein Polynomials},  Master's Thesis, Atilim Uniersity, 2017.


\bibitem{askey} G. E. Andrews, R. Askey, R. Roy, {\it Special
Functions, Encyclopedia of Mathematics and Its Applications}, The
University Press, Cambridge, 1999, 664 pp.



\bibitem{bieden} L.C.Biedenharn, The quantum group SU$_q$(2)
    and a $q$-analogue of the boson operators, {\it J.Phys.A: Math.
    Gen.}, {\bf 22}, 1989, L873-L878.

    \bibitem{qgroup} L. Castellani , J. Wess (eds), Quantum Groups
    and Their Applications in Physics, IOS Press, 1996, 652 pages

    \bibitem{charalam} Ch. A. Charalambides, \textit{Discrete $q$-Distributions}, Wiley, 2016.
\bibitem{jat} II'inskii A, Ostrovska S. Convergence of generalized Bernstein
 polynomials. {\it J. Approx. Theory} 2002; {\bf 116}(1):100-112.

\bibitem{jing} S. Jing, The $q$-deformed binomial distribution and its asymptotic behaviour, {\it J. Phys. A: Math. Gen.}, {\bf 27}, 1994, 493-499.

\bibitem{nazimhigher} N. I. Mahmudov, Higher order limit $q$-Bernstein operators, \textit{Math. Methods Appl. Sciences}, (2011) \textbf{34}(13),
1618-1626.

\bibitem{india}  S. Ostrovska, Positive linear operators generated by analytic functions,
{\it Proc. Indian Acad. Sci. (Math. Sci.)} Vol. 117, No 4, November 2007, pp. 485-493.

\bibitem{JAM} S. Ostrovska, A Survey of Results on the Limit  $q$-Bernstein Operator,  Journal of Applied Mathematics, Volume 2013 (2013), Article ID 159720, 7 pages.

\bibitem{stoer} J. Stoer, R. Bulirsch, {\it Introduction to numerical analysis}, Springer-Verlag,
New York, 1980.

\bibitem{parametric} V.S. Videnskii, On some classes of
$q$-parametric positive operators, {\it Operator Theory, Advances
and Applications}, Vol. {\bf 158}, (2005),  213-222.

\bibitem{wangmeyer} H. Wang,
Properties of convergence for the $q$-Meyer-Konig and Zeller
operators, {\it J. Math. Anal.  Appl.}, {\bf 335} (2), (2007),
1360-1373.

\bibitem{fedja} https://mathoverflow.net/questions/269740/inequality-for-functions-on-0-1

\end{thebibliography}
\end{document}